\documentclass[a4paper,12pt,fleqn]{article}

\usepackage{amssymb}
\usepackage{amsmath}
\usepackage{graphicx}
\usepackage{color}
\usepackage{eurosym}
\usepackage{txfonts}
\usepackage{courier}
\usepackage{pifont}

\usepackage{etoolbox}
\let\bbordermatrix\bordermatrix
\patchcmd{\bbordermatrix}{8.75}{4.75}{}{}
\patchcmd{\bbordermatrix}{\left(}{\left[}{}{}
\patchcmd{\bbordermatrix}{\right)}{\right]}{}{}

\patchcmd{\bbordermatrix}{\begingroup}{\begingroup\openup1\jot}{}{}
\makeatletter
\patchcmd{\bbordermatrix}
  {\vcenter{\kern-\ht\@ne\unvbox\z@\kern-\baselineskip}}
  {\vcenter{\kern-\ht\@ne\unvbox\z@\kern-\baselineskip\kern2pt}}
  {}{}
\makeatother

\usepackage[colorlinks=false,urlcolor=blue,ps2pdf=true,
pdfpagemode=UseNone, pdfauthor={Robin Whitty},
pdfsubject={Triangle bisection}, pdfcreator={Robin Whitty},
pdfproducer={Robin Whitty}]{hyperref}

\newtheorem{Thm}{Theorem}
\newtheorem{Def}[Thm]{Definition}
\newtheorem{Prop}[Thm]{Proposition}

\newtheorem{Lm}[Thm]{Lemma}

\newcommand{\vc}[1]{{ \normalfont {\bf #1}}}

\newcommand{\vo}[1]{{ \normalfont {\bf #1}}^{\bot}}
\newcommand {\rl}{\mathbb{R}}

 \newcommand{\ang}[1]{\langle #1 \rangle}
\newcommand{\angg}[2]{\langle #1,#2 \rangle}

\newcommand{\QED}{\unskip\nobreak\hfill\rlap{\llap{\large $\Box$}}} 

\author{ Robin
Whitty\thanks{128 avenue Saint Germier, 31600, France, robinwhitty@www.theoremoftheday.org}}
\date{\today}
\title{All-angles bisection of  polygon area}

\begin{document}

\maketitle

\hspace{.25in}
\parbox[c]{6in}{\small {\bf Abstract} Given a simple polygon we aim to find the equation of the straight line which bisects the area of the polygon in a given direction. Additionally, we would like to vary this direction with  minimal additional calculation.
 We provide a solution in the case where the polygon is `bisection-convex', meaning that any straight line bisecting the area of the polygon contains exactly two points on the boundary of the  polygon.
}

{\bf Keywords:} plane geometry, computational geometry, bisector, vector algebra

\section{Introduction}
A simple polygon and a direction vector are given  and we want to write down the equation of the line in the given direction which bisects the area of the polygon. This problem was solved very elegantly and in linear time by Shermer \cite{Shermer}. However his solution is given in terms of a vertical bisection, with the given polygon rotated to align this bisection  with the required direction vector. If the vector is changed a new rotation is required and the bisection must be recalculated from scratch.  For some classes of polygons  the totality of bisecting lines may be specified in terms of an envelope to which all bisections are tangent \cite{Berele, DaSilva, Fechtor}. For triangles, this envelope has been known since the late nineteenth century (see, e.g., \cite{Beyer}) and is easy to specify, so that our bisection problem may be solved explicitly as a piece-wise function \cite{Whitty}.  In the current paper we tackle simple polygons in general and  the function becomes a rather elaborate algorithm, which nevertheless avoids explicitly calculating the bisection envelope. Our approach is to make an initial tabulation of data for our polygon which is sufficient to then specify, in sublinear time, a bisection in any direction. This is less general than Shermer: it works for polygons which are `not too' non-convex (`bisection convex' as defined in  \cite{Fechtor}). Nevertheless, in cases where real-time change of bisection angle is what is wanted, our approach, used in tandem with Shermer, seems to us worthwhile. Throughout this paper bisection will always be with respect to area. There are other possibilities, for instance perimeter bisection, which are covered systematically in \cite{Berele}.

\begin{figure}[h]
\hspace{-.3in}\includegraphics[scale=0.30]{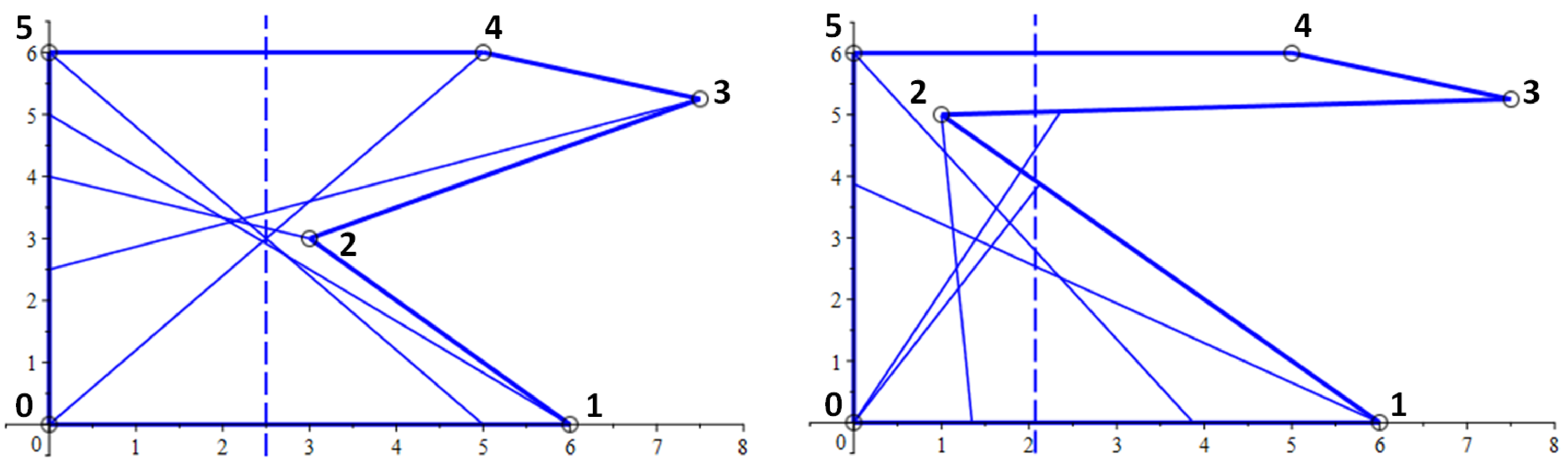}

\hspace{1.1in}(a)\hspace{3.25in}(b)

\caption{(a) A six-vertex bisection-convex polygon. All vertices have integer coordinates except vertex~3 at $(15/2,21/4)$. All chords shown, as well as the vertical dashed line, bisect its area.  (b) A non-bisection-convex polygon. The vertical line and the chords from vertex~1 and vertex~5 are  bisecting. In a specific sense studied in this paper, the other three chords shown also bisect.}
\label{fig:bisectionconvex}
\end{figure}

 \begin{Def}[\cite{Fechtor}]\label{def:bc} A polygon is called {\bf bisection-convex} if any straight line which bisects its area contains exactly two points on the boundary of the polygon.
\end{Def}

Strictly speaking, definition~\ref{def:bc} is referred to by Fechtor-Pradines \cite{Fechtor}, as {\em strictly} bisection convex. He allows also the case where a bisecting line may be tangent to the polygon, that is, may coincide with one or more polygon edges. If you sketch the (non-strictly-bisection-convex) polygon passing anticlockwise through five points, $(0,0),\,(1,0),\,(3,0),\,(1,1),\,(1,2)$, you will find a bisecting  line passing vertically through $(1,0)$ and coinciding with the edge from $(1,1),\,(1,2)$. Such polygons are not beyond the algorithm we present here but their treatment may be safely classed as an implementation issue.

 Figure~\ref{fig:bisectionconvex} shows two non-convex polygons on $n=6$ vertices (labelled  modulo~6). They differ only in the position of vertex~$2$.
  The polygon in figure~\ref{fig:bisectionconvex}(a) is bisection convex. It has area $A_P=30$ so the vertical dashed line, which cuts off a rectangle of half this area, is  bisecting. The six chords shown are all also bisecting. They form a collection in which each vertex is joined by a bisecting line to an opposite edge (in two cases, to the end vertex of an edge). Intuitively, any bisecting line, including the vertical dashed line, must be a pivoting  of one of these chords about a suitable point. This is how our algorithm will work.

 In figure~\ref{fig:bisectionconvex}(b), the vertical dashed line is again bisecting (and is an example of Shermer's algorithm in action): the left and right halfplanes that it creates each contain fifty percent of the polygon area. A selection is shown of chords which join vertices to opposite edges. The chords from  vertex~$1$, at $(6,0)$, and from vertex~5, at $(0,6)$, are bisecting in the halfplane, `global', sense. The chord which descends from the vertex~$2$ at $(1,5)$ is bisecting in a `local' sense: if we descend the chord and then follow the polygon edges anticlockwise back to its start vertex we are describing a region whose area is  half the polygon area. But this is not a bisecting line in the global sense of specifying two halfplanes containing equal shares of the polygon area. The same is true of the chord joining vertex~$0$ at the origin to the edge from vertex~$1$ to vertex~$2$. The other chord from the origin bisects locally in a  more subtle way: descend this chord to the origin and return to its start anticlockwise. Two triangles, one anticlockwise the other clockwise, are traced. Now clockwise areas  are counted as negative. The clockwise triangle has area identical to the thin triangle wedge between the two chords at the origin, so the longer chord again gives a region having  a total of half the polygon area.

 The idea of this paper is that, in bisection convex polygons, the local and global senses of `bisecting' coincide. We will be able to exploit this to collect sufficient local data about the polygon to specify bisections on demand. The word `local' has been used so far in a rather vague way! A formal definition will arrive at the same time as the precise specification of this local bisection data.

 We will be exclusively interested in  bisecting chords as found in figure~\ref{fig:bisectionconvex} so it is worth reserving this term and some notation for them:
 \begin{Def}\label{def:chords}  Let $P$ be a polygon on $n$ vertices, numbered from~0 to~$n-1$ in anticlockwise order, and let $i,j$ be distinct vertices of $P$.
\begin{enumerate}
\item A chord from  vertex $i$ to some point on the edge from $j$ to $j+1$ (taken modulo $n$) will be called a {\bf bisecting chord} if its extension to an infinite line (a) bisects the area of the polygon, and (b) meets the polygon boundary in exactly two points.
 \item The collection of all chords of $P$ joining $i$ to some point on the edge from $j$ to $j+1$  form a triangle which will be denoted $\Delta_{i,j}$. For convenience, we will sometimes refer to $\Delta_{i,j}$ as {\bf the} chord from vertex~$i$ to edge from $j$ to $j+1$. We further abuse notation later by taking $\Delta_{i,j}$ to also denote the area of this triangle.
 \item The triangle $\Delta_{i,j}$ will be referred to as a {\bf bisecting chord} if some individual chord in its collection is a bisecting chord.
 \end{enumerate}
\end{Def}
In figure~\ref{fig:bisectionconvex}(a), we may say that chord $\Delta_{3,5}$ is  bisecting   since the line segment (actual chord) from vertex~3 to point $(0,5/2)$ bisects the polygon and its extension meets the polygon in just these two points. We do not mind that the triangle $\Delta_{3,5}$ also contains line segments joining vertex~3 to the edge between vertices~5 and~0 which fail this test: the line segment to vertex~0 meets the polygon in three points, for example. In figure~\ref{fig:bisectionconvex}(b), chords $\Delta_{1,5}$ and $\Delta_{5,0}$ are  bisecting; in fact all actual chords in these triangles meet the polygon in just two points or, in the case of the line from vertex~1 to vertex~5 and its reverse, a point and an edge (the edge joining vertex~1 and vertex~2). These are, in fact, the only bisecting chords; the other three vertex-edge line segments shown, and two that were omitted for the sake of clarity, all bisect only in the local sense. Such local bisections are not in themselves of interest to us, but their action of tracing positive and negative areas will be essential to the tabulation of polygon data which generates our all-angles bisections.

Our bisecting algorithm  will start by locating the bisecting chord at each vertex of a bisection-convex polygon. Translated to the origin as direction vectors, these chords represent a partition of the half-circle into sectors, see figure~\ref{fig:sectorvectors}. Any desired  slope for bisection will necessarily fall into a unique sector, or will coincide with one of the direction vectors. This identifies a bisecting chord which we may pivot to give a bisection of the desired slope.
\begin{figure}[h]
\centerline{\includegraphics[scale=0.5]{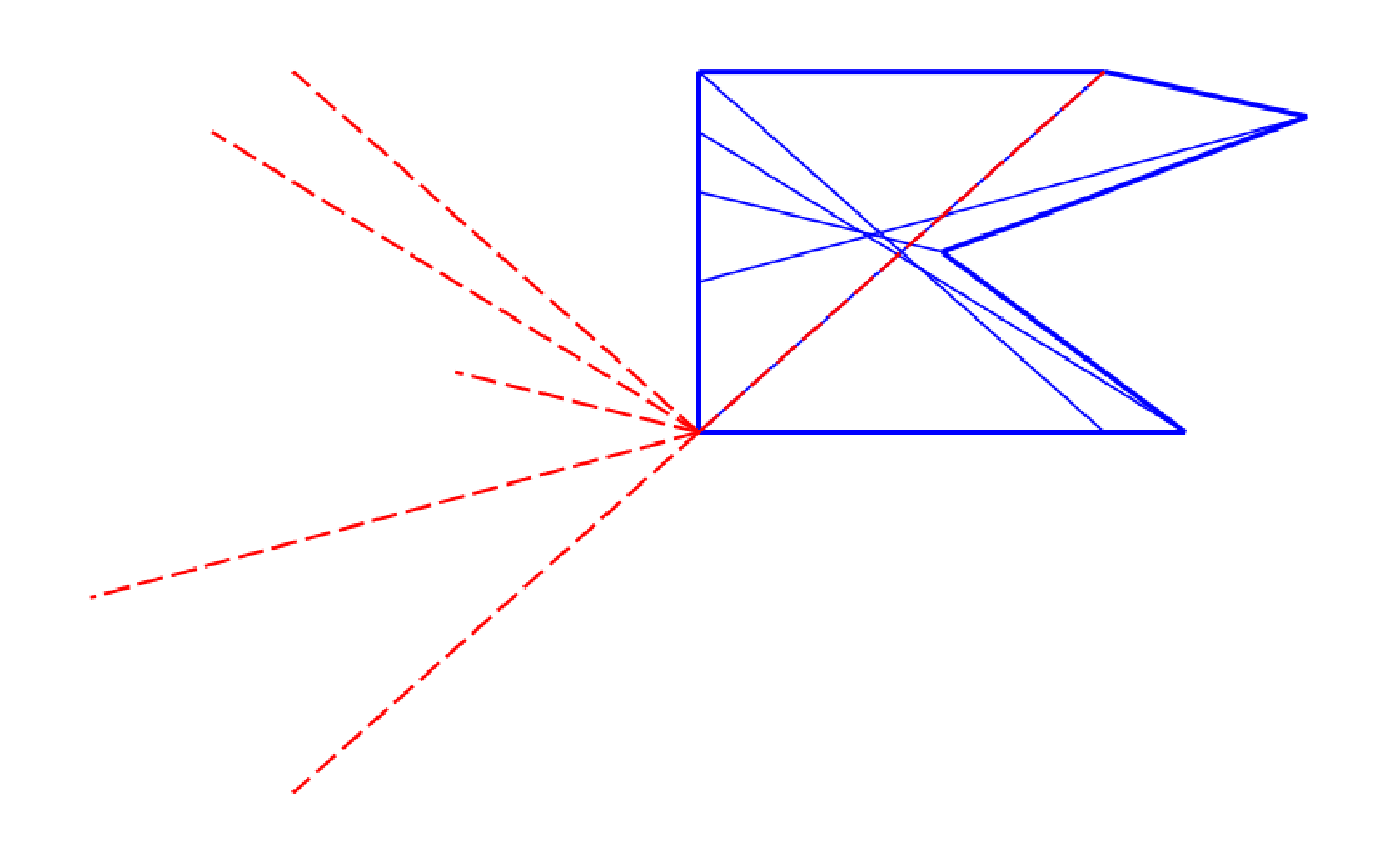}}

\caption{Bisecting chords and (the dashed lines) associated direction vectors}
\label{fig:sectorvectors}
\end{figure}

Finding the bisecting chords and arranging them into sectors of a half-circle is described in the next section. Thereafter, a suitable bisecting chord is `pivoted' to the required slope about a point chosen to guarantee that area bisection is preserved. This is described in section~3. Section~4 addresses the pre-requisite question of how to determine if a given polygon is bisection convex. Finally section~5 summarises the algorithm we have specified and addresses some unresolved issues: can our approach work for non-bisection-convex polygons; and can our local approach to polygon bisection be of value in its own right.

\section{Outline of our method}
\subsection{Our goal: pivoting a bisecting line to a desired angle}
 Let $P$ be an $n$-vertex, simple, bisection-convex polygon. The task in this section is to specify the~$n$ bisecting chords of~$P$ which, taken as direction vectors, will span a total angle of $\tau/2$ radians ($\tau$ being the complete circle). In this sense we may say that each consecutive pair of bisecting chords forms a sector of the half circle. Additionally, a chord must be able to be pivoted about some point, that is turned through an angle less than or equal to the angle formed with its consecutive chord (as a direction vector),  so that its property of bisecting the area of $P$ is conserved.

In figure~\ref{fig:sectors}(a) the bisecting chords $\vc{x}$ and $\vc{y}$ form a sector containing the slope vector $\vc{u}$. The heads and tails of these chords subtend the same two edges, $\vc{a}$ and $\vc{b}$, of $P$ (possibly being end-vertices of these edges). If chord $\vc{x}$ is pivoted (figure~\ref{fig:sectors}(b)) it will form two triangles on edges $\vc{a}$ and $\vc{b}$. By pivoting about the correct point on $\vc{x}$   the areas of these triangles can be made equal and this ensures that the pivoted chord still bisects $P$. It is straightforward to calculate the necessary pivot point, and this is marked in figure~\ref{fig:sectors}(b) as $t$, being the scalar value such that $t\vc{x}$ cuts $\vc{x}$ at  the appropriate  point. This point may now be given as the output of our  algorithm: together with the slope vector $\vc{u}$ it specifies the required bisecting line.
\begin{figure}[h]
\centerline{\includegraphics[scale=0.4]{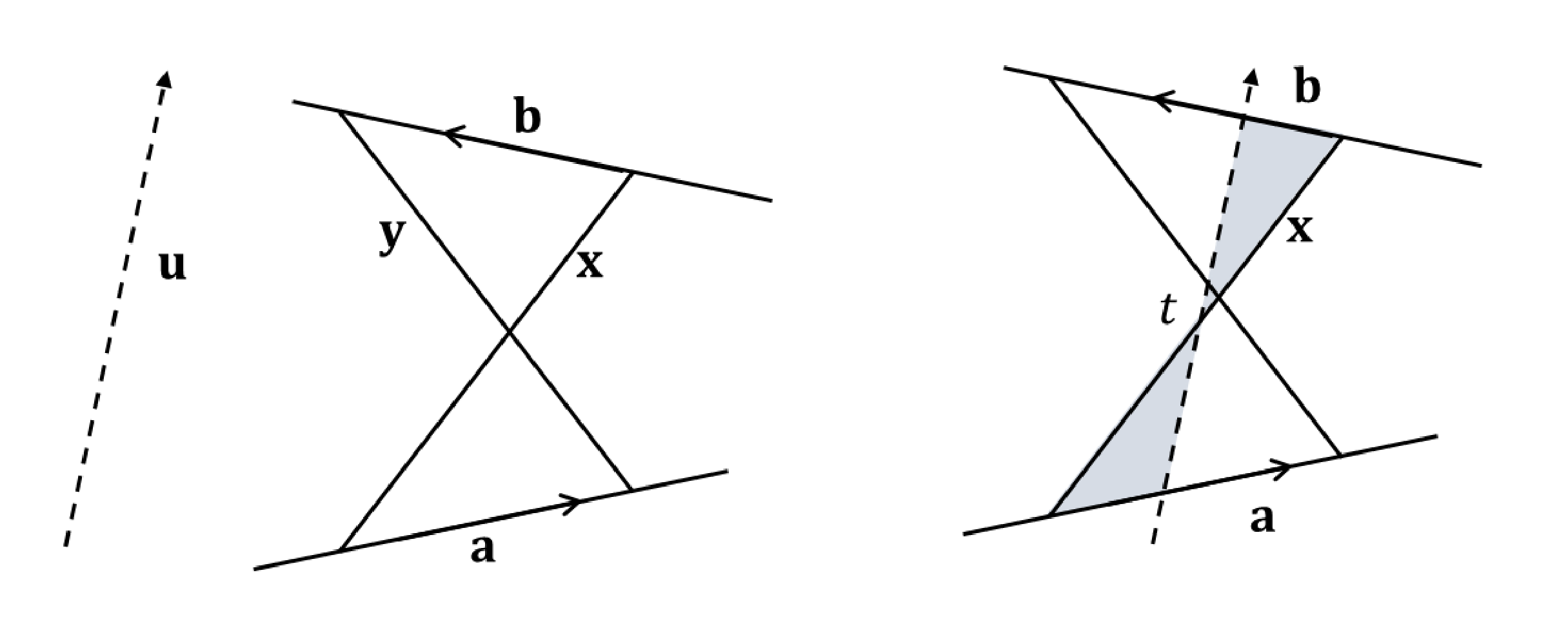}}

\hspace{1.6in}(a)\hspace{3in}(b)

\caption{(a) Choosing two consecutive bisecting chords $\vc{x}$ and $\vc{y}$ for the given slope vector $\vc{u}$; (b) pivoting on $\vc{x}$ to bisect in direction $\vc{u}$}
\label{fig:sectors}
\end{figure}

\subsection{Bisecting chords: identification strategy}
\label{sub:bisecting-chords-id}
We return to the details of the pivoting calculation in section~3. For now, figure~\ref{fig:sectors}(b) serves to suggest how to create our collection of bisecting chords: for the pivoting mechanism to work it will be sufficient to  find a bisecting chord from each vertex of polygon~$P$ to some opposite edge. These chords may then be reordered and reoriented to become consecutive direction vectors, as shown in figure~\ref{fig:sectorvectors}. This strategy may be justified as follows.
\begin{Lm}
\label{lm:intersecting}
For any polygon, two bisecting lines must  intersect in the interior of its convex hull.
\end{Lm}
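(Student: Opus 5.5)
Suppose $L_1$ and $L_2$ are two distinct lines, each bisecting the area $A_P$ of polygon $P$. I will argue by contradiction: assume they do not meet in the interior of the convex hull $H$ of $P$. Either they are parallel, or they meet at a point $q \notin \mathrm{int}(H)$. In both cases the aim is to find a region of positive area that one line assigns to one side and the other line assigns to the other, and to show this forces unequal halves. Since $P$ is simple it has positive area, $P \subseteq H$, and any open subset of $\mathrm{int}(H)$ meeting $\mathrm{int}(P)$ carries positive area of $P$.

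\emph{Parallel case.} Take $L_1 \ne L_2$ parallel, with $L_2$ lying in the open halfplane $L_1^+$. Since $L_1$ bisects, each closed side of $L_1$ contains area $A_P/2$, and the same holds for $L_2$. The open strip $S$ between them satisfies $L_2^+ \cup S \subseteq L_1^+$, up to measure-zero lines. Hence $\mathrm{area}(P\cap S)=0$. But $L_1$ bisects, so both sides of $L_1$ contain points of $P$, and likewise for $L_2$. Then $P$ has interior points on both sides of the strip. Since the interior of a simple polygon is connected (it is a Jordan domain), some interior point of $P$, together with a neighbourhood of it, lies in $S$. This gives positive area, a contradiction. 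Parallel bisectors therefore coincide, so distinct bisectors are never parallel.

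\emph{Crossing case.} Let $L_1$ and $L_2$ meet at $q$. They divide the plane into four open wedges $W_{++}, W_{+-}, W_{-+}, W_{--}$, labelled by the sides of $L_1$ and $L_2$. Bisection by both lines gives $a_{++}+a_{+-}=a_{-+}+a_{--}$ and $a_{++}+a_{-+}=a_{+-}+a_{--}$, where $a$ denotes the area of $P$ in each wedge. Subtracting, $a_{+-}=a_{-+}$; adding, $a_{++}=a_{--}$.

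If $q\notin \mathrm{int}(H)$, then there is a supporting line through $q$ with $H$ on one closed side of it. That closed halfplane through $q$ meets the four wedges in at most three of them, and actually fully misses one opposite pair partially. More precisely, a line through the apex $q$ avoids at least one of the four open wedges entirely: a halfplane whose boundary passes through $q$ contains at most two adjacent full wedges and parts of the two wedges on either side. Check this: choose the halfplane boundary direction. The wedge lying entirely on the far side has area $0$, say $a_{--}=0$. Then $a_{++}=0$ as well, so all of $P$ lies in $W_{+-}\cup W_{-+}$ up to measure zero. These two wedges are opposite and touch only at $q$, so they disconnect $\mathrm{int}(P)$ unless one of them carries all the area. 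If one carries all the area, say $W_{+-}$, then $a_{-+}=a_{+-}=A_P>0$, which is impossible. If instead both carry area, connectedness of $\mathrm{int}(P)$ is violated, since an open connected set cannot lie in two disjoint open sets meeting both of them.

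The main obstacle is the geometric claim that a supporting halfplane at $q$ misses an entire open wedge. If $q$ lies on $\partial H$ but in the edge direction coinciding with $L_1$, the missed wedge is only missed up to a boundary line. That is still measure zero, so it is fine; I will handle it by a short angular argument. The second delicate point is the degenerate boundary case $q\in\partial H$, which is where the lemma's word ``interior'' matters. The argument above already yields a contradiction there.
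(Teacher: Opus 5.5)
Your proposal is correct, and in the crossing case it is the paper's own argument: the two lines cut $P$ into four wedges, bisection forces opposite wedges to have equal area, and if the crossing point $q$ is not interior to the convex hull, a supporting line at $q$ leaves some wedge with zero area, and hence its opposite wedge too. You go beyond the paper in two ways: you treat parallel bisectors, and you use connectedness of the interior of a simple polygon to justify the paper's bare ``which is impossible''. The paper is silent on both points, and the second genuinely needs connectedness, since two disjoint squares admit distinct parallel bisectors.
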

{\bf Proof.} The intersection of the two lines divides the polygon's area into four regions. Pairs of opposite regions must have equal area since the lines are bisecting. But if the point of intersection is exterior to the convex hull, one of these regions will contain zero polygon area, which is impossible.
\QED

\begin{Lm}
\label{lm:circling}
Given a bisection-convex polygon on $n$ vertices its $n$ bisecting chords may be taken in order so that the angles between consecutive pairs, together with the angle between the first and last, total $\tau/2$ radians.
\end{Lm}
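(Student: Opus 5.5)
For each direction $\theta\in[0,\tau/2)$ there is a bisecting line $L(\theta)$ of that direction: the area to the left of a line of direction $\theta$ increases continuously and monotonically as the line is translated, so the intermediate value theorem gives a bisector. It is unique because between two parallel bisectors there could be no polygon area, and then a line strictly between them would meet the boundary in more than two points or not at all. By bisection-convexity, $L(\theta)$ meets $\partial P$ in exactly two points $p(\theta)$ and $q(\theta)$. The plan is to follow these two endpoints as $\theta$ increases from $0$ to $\tau/2$. At the end, $L(\tau/2)=L(0)$ with its orientation reversed, so $p$ and $q$ have exchanged roles.

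The first step is to show that $p(\theta)$ and $q(\theta)$ depend continuously on $\theta$ and move monotonically around $\partial P$, both anticlockwise. Two bisectors $L(\theta)$ and $L(\theta')$ intersect in the interior of the convex hull by Lemma~\ref{lm:intersecting}. Bisection-convexity should put this intersection point inside $P$ on both chords. As $\theta$ increases, the chord then rotates about a point strictly between its endpoints, so both ends advance in the same rotational sense. The two half-plane areas remain equal, so the chord cannot jump, and continuity follows. Over $\theta\in[0,\tau/2]$, $p$ travels from $p(0)$ to $q(0)$ and $q$ travels from $q(0)$ to $p(0)$. Together they therefore sweep the whole boundary exactly once.

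The second step locates the chords that pass through vertices. Since the sweep covers the whole boundary, each vertex $i$ is hit exactly once, by $p$ or by $q$. The first case is that $i$ is hit at a parameter $\theta_i$. Then $L(\theta_i)$ is a line through $i$ meeting $\partial P$ exactly twice, and its other endpoint lies on some edge from $j$ to $j+1$. So $\Delta_{i,j}$ is the bisecting chord from vertex $i$. Uniqueness of the bisector through $i$, which follows from monotonicity, gives exactly one such chord per vertex. The second case is that the other endpoint is also a vertex at that instant. That chord serves both vertices. I would handle this degenerate coincidence either by a perturbation or by counting it with multiplicity, in line with the paper's remark about chords ending at vertices.

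The third step orders the chords and adds up the angles. Order the $n$ chords by their parameters $\theta_i\in[0,\tau/2)$, and view each as a direction vector modulo $\tau/2$. Consecutive angular gaps are $\theta_{i_{k+1}}-\theta_{i_k}$, and the closing gap is $\tau/2-\theta_{i_n}+\theta_{i_1}$. These telescope to $\tau/2$. The main obstacle is the first step: proving rigorously that the endpoints advance monotonically, meaning the intersection of nearby bisectors lies on the chord inside $P$. This is exactly where bisection-convexity, and not mere convexity of the hull, must be used. I would first argue by contradiction. Suppose the pivot point lay outside the chord. Then a pair of opposite wedges of nearby bisectors would have unequal area, or an intermediate line would cross $\partial P$ more than twice.
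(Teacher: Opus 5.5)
Your proof is correct, but it takes a different and much heavier route than the paper's.

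\textbf{The paper's route.} The paper takes the $n$ bisecting chords as given. It uses Lemma~\ref{lm:intersecting} to place their crossings inside a circle with the chord from vertex~0 along a diameter, and reads the other chords off one semicircle, so the angles fill $\tau/2$.

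\textbf{Your third step.} This is the same observation in cleaner form, and on its own it proves the lemma as stated. For any finite family of directions sorted in $[0,\tau/2)$, the consecutive gaps plus the closing gap telescope to $\tau/2$, with no use of bisection at all. A vertex--vertex chord counted from both ends just contributes a zero gap. So counting with multiplicity is the right convention, and no perturbation is needed.

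\textbf{What your first two steps add.} They prove more than the lemma asserts. They justify the presupposition of exactly one bisecting chord per vertex, which the paper treats as immediate in Theorem~\ref{Thm:bisection-convex}. The sweep also shows that angularly consecutive chords have endpoints on a common pair of edges, since no vertex can be passed between them. That is what the paper relies on later, in the proof of Theorem~\ref{Thm:bisection-convex} and in Proposition~\ref{prop:torus}.

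\textbf{Closing the sketched claim.} The cost is the key claim, which you only sketch. It can be closed as follows. A bisecting line $L$ of a bisection-convex $P$ meets $\mathrm{int}\,P$ in a single open chord, which is a cross-cut. So $\mathrm{int}\,P\setminus L$ has exactly two connected pieces, one in each open half-plane.

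Suppose bisectors $L_1,L_2$ met at a point $X$ outside the relative interior of the chord $c_1$. Then the open chord $c_1$ lies on one open ray of $L_1$ from $X$, so both quadrants at $X$ flanking that ray carry positive area. The two bisection conditions force opposite quadrants to carry equal area, so both quadrants on the far side of $L_2$ carry positive area too. Yet the piece of $\mathrm{int}\,P$ on that side is connected and misses $L_1$, so it lies in a single quadrant, a contradiction.

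Hence any two bisecting chords cross at a point interior to both. This gives the interleaving of endpoints, the strict monotonicity of $p$ and $q$, and uniqueness of the bisecting chord at each vertex.
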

{\bf Proof.} By lemma~\ref{lm:intersecting}, the collection of all points of intersection of the $n$ bisecting chords are contained in some smallest circle such that the bisecting chord from vertex~$0$ forms a diameter. The remaining  bisecting chords intersect the circle at $n-1$ points lying on the same side of this diameter. Draw lines from the centre of the circle to these points. The collection of angles between lines in the circle total  $\tau/2$ radians.
\QED

\begin{Thm}
\label{Thm:bisection-convex}
A polygon $P$ on $n$ vertices is bisection-convex if and only if it has $n$ bisecting chords.
\end{Thm}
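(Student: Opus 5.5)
Both directions rest on one fact: for each vertex $i$ of $P$ and each direction there is exactly one line through a given point in that direction bisecting the area. The reason is that the area on one side of the line varies continuously and monotonically as the line is translated. For a line through vertex $i$, rotating it about $i$ through $\tau/2$ exchanges its two sides. So the area on its left passes from some value $s$ to $A_P-s$, and by the intermediate value theorem some line through $i$ bisects. The phrase ``$n$ bisecting chords'' will be read as: every vertex $i$ is the endpoint of a bisecting chord $\Delta_{i,j}$ in the sense of definition~\ref{def:chords}, i.e.\ one whose extension also meets the boundary in exactly two points.

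For the forward direction, assume $P$ is bisection-convex and fix a vertex $i$. Take a bisecting line through $i$, which exists by the rotation argument. By definition~\ref{def:bc} it meets $\partial P$ in exactly two points, one of which is $i$. The other lies on some edge from $j$ to $j+1$, possibly at an endpoint. The segment joining them is therefore a chord in $\Delta_{i,j}$ satisfying (a) and (b) of definition~\ref{def:chords}, so $\Delta_{i,j}$ is bisecting. Doing this for each $i$ gives $n$ bisecting chords. They are distinct as chords because their starting vertices differ; if one chord joins two vertices it is counted from both ends, which is harmless. I would remark that this direction is essentially immediate.

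The converse is the harder direction. Suppose every vertex carries a bisecting chord, and let $L$ be any bisecting line. The goal is to show that $L$ meets $\partial P$ in exactly two points. Take the $n$ bisecting chords, extended to lines, and order them by direction as in lemma~\ref{lm:circling}, so that their directions sweep out $\tau/2$. The direction of $L$ then lies in a closed sector between two consecutive chords $\vc{x}$ and $\vc{y}$. I would show that $L$ is obtained from $\vc{x}$ by the pivoting of figure~\ref{fig:sectors}. Between consecutive bisecting chords no vertex is crossed, since each vertex has its own chord and consecutive chords are adjacent in the angular order. Hence the lines obtained by rotating $\vc{x}$ towards $\vc{y}$, while keeping the area bisected, all have their two boundary intersections on the same two edges $\vc{a}$ and $\vc{b}$. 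By uniqueness of the bisecting line in each direction, $L$ is one of these pivoted lines, so $L$ meets $\partial P$ only on $\vc{a}$ and $\vc{b}$, in exactly two points.

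The main obstacle is justifying that the family of bisecting lines between $\vc{x}$ and $\vc{y}$ cannot pick up extra boundary intersections, that is, that no part of $P$ pokes across the moving line. My first attempt would be a contradiction argument. If some bisecting line in the sector met $\partial P$ in more than two points, then by continuity of the bisecting line in its direction there would be a first direction at which the line touches a reflex vertex $v$. That line is then a bisecting line through $v$. Using uniqueness in each direction together with lemma~\ref{lm:intersecting}, I would argue that it forces an extra bisecting direction at $v$ strictly inside the sector, contradicting the assumption that the chord from $v$ lies elsewhere in the angular order. Making this counting rigorous, and in particular matching ``one bisecting chord per vertex'' against ``one per sector boundary,'' is where I expect the real work to be.
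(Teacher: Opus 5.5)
Your forward direction is fine, and more careful than the paper's, which just calls it immediate. The converse has a genuine gap at exactly the step you flag, and it is the same place where the paper's own proof breaks.

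The claim that no vertex is crossed between consecutive chords ``since each vertex has its own chord'' does not follow. A vertex can lie on several bisecting lines, only one of which need meet $\partial P$ in two points. In your contradiction sketch, the first bisecting line touching a reflex vertex $v$ has $v$ in the interior of its intersection with $P$. It therefore meets $\partial P$ in at least three points and is not a bisecting chord. Its direction lying inside the sector is fully compatible with the bisecting chord from $v$ lying in another sector. Lemma~\ref{lm:intersecting} gives no leverage either, since $v$ need not lie on the hull.

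The paper's argument has the same hole. It places the extra intersection ``without loss of generality'' beyond $x'$. It also asserts that a reflex angle between $i'$ and $j'$ forces a vertex between $j$ and $j'$, which is false: a notch hanging off the arc from $i'$ to $j$ can cut $L$ strictly between $x$ and $x'$.

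In fact no argument can close this gap, because the `if' direction is false. Take the anticlockwise polygon $B=(-1,0)$, $m_1=(-0.01,0)$, $w=(0,0.32)$, $m_2=(0.01,0)$, $C=(1,0)$, $A=(0,1)$. This is a triangle with a thin notch, and $A_P=0.9968$.
\begin{enumerate}
\item By symmetry the line $x=0$ bisects. It meets $\partial P$ only at $A$ and $w$, since below $w$ it runs down the notch. This gives bisecting chords at $A$ and $w$.
\item The vertices $A,B,C,m_1,m_2$ lie on the boundary of the convex hull, so each lies on exactly one bisecting line.
\item For $B$ this line is the median through $(0,1/3)$, turned slightly upward to compensate for the notch. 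It passes above $w$ and is clean; likewise for $C$.
\item For $m_1$, the line through $m_1$ and $w$ has more than $A_P/2$ on its left: it gains the part of $P$ between it and $x=0$ above $w$. The vertical through $m_1$ has less than $A_P/2$ on its left. So the bisecting line through $m_1$ lies strictly between these two, passes left of $w$, never enters the notch, and is clean; likewise for $m_2$.
\item A clean line through $w$ must run down the notch, and there the area on one side is monotone in the direction. So $x=0$ is the only bisecting chord at $w$.
\end{enumerate}
Thus there are exactly $n=6$ bisecting chords, with $A$--$w$ counted from both ends as the paper does in its first example. A small asymmetric perturbation removes that coincidence if you prefer.

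Yet the horizontal bisector $y=y_0$ satisfies $(1-y_0)^2-(0.32-y_0)^2/32=0.4984$, so $y_0\approx 0.294<0.32$. It crosses both notch edges and meets $\partial P$ in four points, so $P$ is not bisection-convex. This line lies in the sector between the chords from $C$ and $B$. The bisecting lines in that sector sweep across $w$ twice, and $w$ sits on the arc between those chords, exactly the configuration neither your sketch nor the paper handles. The statement needs an extra hypothesis or a different characterisation; no refinement of the counting argument will work.
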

{\bf Proof.} The `only if' part is immediate from the definition of bisection convexity, defintition~\ref{def:bc}.

For the `if' part,  suppose that a bisecting line $L$ is given. We must show that it bisects~$P$ while remaining interior to, or on the boundary of, $P$. We may locate $L$ on the half circle of bisecting chords, as in lemma~\ref{lm:circling}. Suppose the bisecting chord preceding $L$, anticlockwise, is $c$ and the bisecting chord following it is $c'$. By Lemma~\ref{lm:intersecting},~$c$ and~$c'$ intersect in the interior of the convex hull of~$P$, so we may denote the points of intersection of these chords with the boundary of~$P$ by $i,i',j,j'$, in anticlockwise order, where~$c$ joins~$i$ to~$j$ and~$c'$ joins~$i'$ to~$j'$. Suppose there is a vertex~$v$ of~$P$ lying strictly between~$i$ and~$i'$ anticlockwise on~$P$. Then, since there are $n$ bisecting chords, there is a bisecting chord originating at~$v$, contradicting the choice of~$c$ and~$c'$ as consecutive. Similarly there can be no vertex lying anticlockwise between~$j$ and~$j'$. On the other hand, $L$ must intersect~$P$ between~$i$ and~$i'$ and again between~$j$ and~$j'$, by the choice of~$c$ and~$c'$. Say these intersections occur at~$x$ and~$x'$, respectively. Now suppose that $L$ meets the boundary of $P$ in some further point~$y$ which, without loss of generality, we will locate on the continuation of $L$ beyond  $x'$ Then the edges of~$P$ must form a clockwise angle somewhere between~$x'$ and~$y$. This cannot lie anticlockwise beyond~$j'$, otherwise bisecting chord~$c'$ will also intersect the boundary of~$P$ additionally to the intersections at~$i'$ and~$j'$. But if the angle occurs between~$i'$ and~$j'$ then this requires there to be a vertex of~$P$ between~$j$ and~$j'$ which we know is impossible. Therefore~$L$ intersects~$P$ only in two points.
\QED

We observe that Theorem~\ref{Thm:bisection-convex} provides an algorithm for deciding whether a given polygon is bisection-convex. It is this algorithm which we investigate in section~4. Happily, it runs off the same  initial polygon data tabulated for our bisection algorithm: a matrix $\Delta_P$ of  areas of triangles joining polygon vertices to opposite edges, and a matrix $R_P$ of chords which are bisecting in the  local sense which we illustrated in figure~\ref{fig:bisectionconvex}(b).

\section{Preprocessing: assembling polygon data}
\subsection{The triangle areas matrix}
\label{subsection:triangleareas}
 The edges of $P$ may be written $[i,i+1]$, counting modulo~$n$ and using square brackets to avoid any confusion with coordinate pairs. Similarly, we may refer to non-polygon edges joining vertices of $P$: $[i,i+2]$,$[i,i-2]$, etc. We may refer to vertex $i$, for short, but in calculations this is represented by a position vector $\vc{v}_i$. The edge $[i,i+1]$ is $-\vc{v}_i+\vc{v}_{i+1}$ as a direction vector. The position vector of a point on edge $[i,i+1]$ is  given by $\vc{v}_i(1-r)+\vc{v}_{i+1}r$ where $r\in [0,1]$.

Our polygon data is based on  triangles formed on $P$ by letting a vertex $i$ subtend a non-incident edge $[j,j+1]$. Following definition~\ref{def:chords}, this is our  triangle $\Delta_{i,j}$ of line segments whose area it is convenient to also denote by $\Delta_{i,j}$. With the convention that $\Delta_{i,i}=0$ (no line seqments) and $\Delta_{i,i-1}=0$ (one line segment, zero area), these $\Delta_{i,j}$ form a matrix, denoted by $\Delta_P$.
\begin{figure}[h]
\hspace{.1in}\parbox[c]{3in}{\includegraphics[scale=0.32]{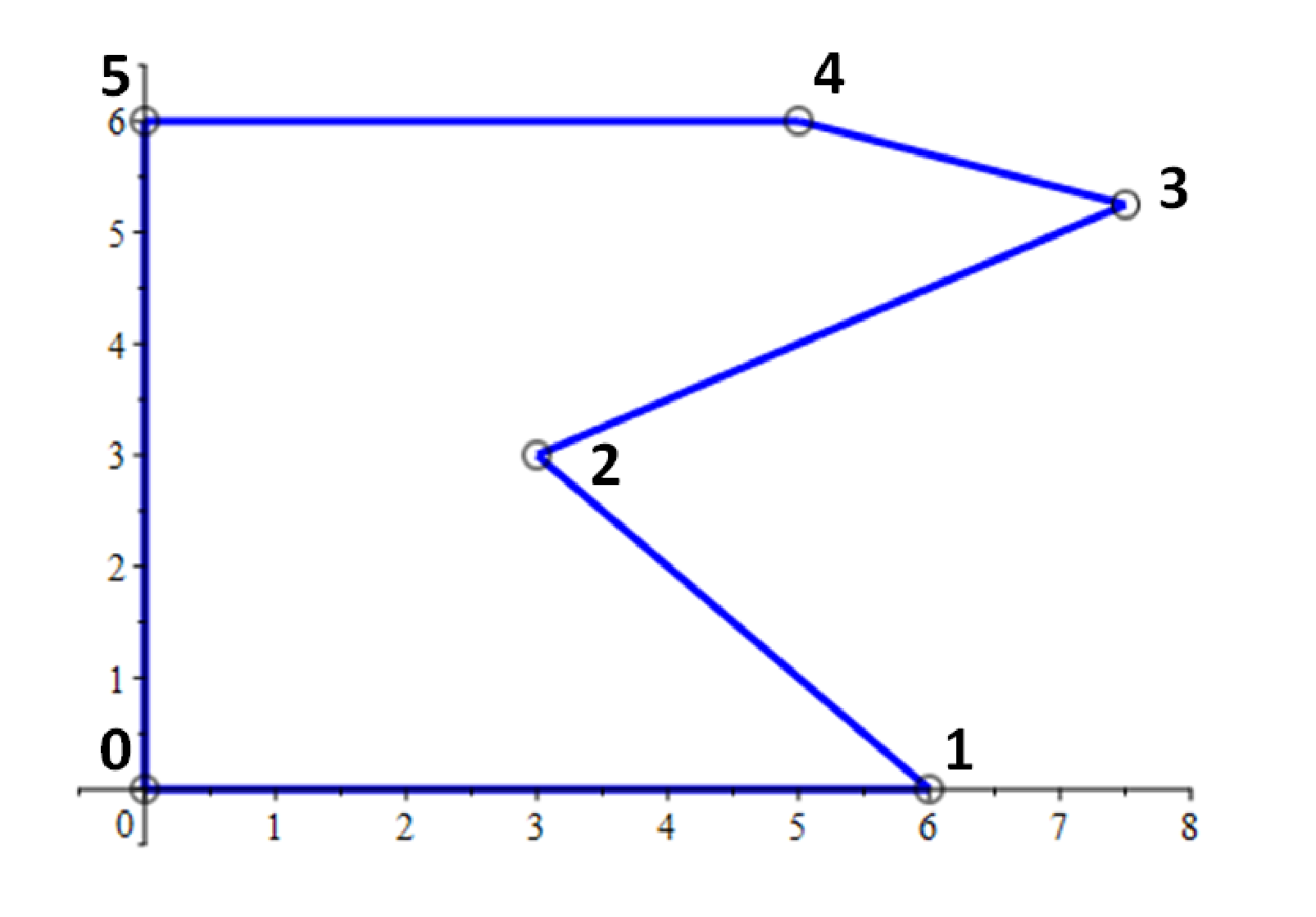}}\hspace{.3in}\parbox[c]{3in}{$\Delta_P=\bbordermatrix{ & 0 & 1 & 2 & 3 & 4 & 5  \cr
0 & 0&9&-{\frac{27}{8}}&{\frac{75}{8}}&15&0 \cr
1 & 0&0&-{\frac{81}{8}}&{\frac{57}{8}}&15&18\cr
2 & 9&0&0&{\frac{9}{2}}&{\frac{15}{2}}&9\cr
3 & {\frac{63}{4}}&-{\frac{81}{8}}&0&0&{\frac{15}{8}}&{\frac{45}{2}}\cr
4 & 18&-{\frac{15}{2}}&{\frac{9}{2}}&0 &0&15\cr
5 & 18&0&{\frac{81}{8}}&{\frac{15}{8}}&0&0
}
$}

\caption{Bisection-convex polygon of figure~\ref{fig:bisectionconvex}(a), and its triangle area matrix $\Delta_P$.}
\label{fig:triangles}
\end{figure}

We make some observations which we will illustrate with reference to figure~\ref{fig:bisectionconvex}(a) as reproduced in figure~\ref{fig:triangles}:
\begin{itemize}
\item Unless $P$ is convex,  triangles formed by vertices subtending edges will not necessarily be entirely contained within $P$. In figure~\ref{fig:triangles} we have
area $\Delta_{1,4}=15$ which comprises a triangle outside of $P$ (apex at vertex~1 and base on edge $[2,3]$) whose area is $135/26$ and a quadrilateral inside $P$ whose area is $255/26$.
\item Three or more vertices collinear will result in zero entries in  $\Delta_P$. Thus we have
$\Delta_{5,1}=0$ because vertices $5,1$ and~$2$ are collinear.
\item Area respects the anticlockwise ordering of vertices, meaning that triangles whose vertices are ordered clockwise are taken to have negative area. Thus $\Delta_{1,2}=-81/8$.
\end{itemize}

The matrix $\Delta_P$ provides the raw data needed for constructing our second tabulation, a matrix $R_P$ of locations on edges which bisect $P$ in the local sense. When $P$ is bisection-convex, entries in $R_P$ which lie in the interval $[0,1]$ then identify bisections which are also global (halfplane) bisections. Our matrices are further used (in section~4) to test if $P$ is bisection-convex, via Theorem~\ref{Thm:bisection-convex}. Given its central role in our algorithm it counts as a surprising piece of good luck that the whole of the triangle areas matrix can be extrapolated from a suitable choice of just three initial rows:
\begin{Thm}
\label{Thm:DeltaRank}{\cite{Forbes}}
For any polygon $P$ the triangle areas matrix $\Delta_P$ has rank~3.
\end{Thm}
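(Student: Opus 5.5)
The plan is to write $\Delta_P$ explicitly as a product of an $n\times 3$ matrix and a $3\times n$ matrix. This gives rank at most~3 at once. Equality then follows by checking that both factors have full rank~3.

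First I express each entry as a determinant. Write $\vc{v}_i=(x_i,y_i)$ and set $\vc{w}_i=(1,x_i,y_i)\in\rl^3$. The signed (anticlockwise-positive) area of the triangle with vertices $\vc{v}_i,\vc{v}_j,\vc{v}_{j+1}$ is
\[
\Delta_{i,j}=\tfrac12\det\begin{pmatrix}1&x_i&y_i\\1&x_j&y_j\\1&x_{j+1}&y_{j+1}\end{pmatrix}=\tfrac12\,\vc{w}_i\cdot(\vc{w}_j\times\vc{w}_{j+1}).
\]
This formula automatically respects the conventions of section~\ref{subsection:triangleareas}. Indeed, $\Delta_{i,i}$ and $\Delta_{i,i-1}$ are both degenerate determinants with a repeated row, so they equal~$0$. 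The sign convention is also correct, for instance $\Delta_{1,2}<0$ in figure~\ref{fig:triangles}. Let $A$ be the $n\times3$ matrix with rows $\vc{w}_i$, and let $B$ be the $n\times3$ matrix with rows $\vc{e}_j=\vc{w}_j\times\vc{w}_{j+1}$. Then $\Delta_P=\tfrac12 AB^{T}$, so $\mathrm{rank}\,\Delta_P\le 3$.

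For the lower bound I use the standard fact that, when $A$ has full column rank~3 and $B^T$ has full row rank~3, the product $AB^T$ has rank exactly~3.

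For $A$: its rows $(1,x_i,y_i)$ span $\rl^3$ unless all vertices lie on one affine line. That is impossible for a simple polygon, which has positive area.

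For $B$, which is the main step: in homogeneous coordinates, $\vc{e}_j$ is the coefficient vector of the line through edge $[j,j+1]$. So $\mathrm{rank}\,B\le 2$ would mean that all $\vc{e}_j$ are orthogonal to some nonzero $\vc{p}\in\rl^3$. Geometrically, all $n$ edge-lines would be concurrent at a single projective point~$p$, possibly at infinity, in which case the edges would all be parallel. I will rule this out as follows.

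Consecutive edges $[j-1,j]$ and $[j,j+1]$ meet at $\vc{v}_j$. If their lines were distinct, both would pass through $\vc{v}_j$ and through $p$. Two distinct lines meet in only one point, so $p=\vc{v}_j$, and in particular $p$ is finite. This can happen for at most one vertex~$j$. At every other vertex the two incident edge-lines must coincide. Following the boundary cycle, all edges then lie on a single line, except possibly the two edges at the special vertex. However, the cycle closes up, so those two edges join points of that same line and lie on it too. The polygon would then be collinear, contradicting positive area. Hence $\mathrm{rank}\,B=3$, and therefore $\mathrm{rank}\,\Delta_P=3$.

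I expect the only delicate point to be this concurrency argument for~$B$: the case of a point at infinity (all edges parallel) must be handled, and the case where $p$ coincides with a single vertex must be dispatched properly. Everything else is the routine factorisation.
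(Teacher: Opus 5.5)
Your proof is correct, and it supplies something the paper does not: the paper gives no argument for this theorem at all, only the attribution to Forbes followed immediately by the end-of-proof box. Your factorisation $\Delta_P=\tfrac12 AB^{T}$ agrees with the paper's conventions. It is the identity $2\Delta_{i,j}=(-\vc{v}_i+\vc{v}_j)\times(-\vc{v}_i+\vc{v}_{j+1})$ used in the proof of Lemma~\ref{lm:cross-prod}. The lower bound from non-collinear vertices ($\mathrm{rank}\,A=3$) and non-concurrent edge lines ($\mathrm{rank}\,B=3$) is sound.

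One small tidy-up in the concurrency step. No exception is needed for the two edges at the special vertex $j_0$. The coincidences at the other $n-1$ vertices already link all $n$ edges consecutively, from $[j_0,j_0+1]$ round to $[j_0-1,j_0]$. As you wrote it, the patch ``those two edges join points of that same line'' tacitly needs $\vc{v}_{j_0}$ on the line, and that is exactly what the coincidences at $j_0\pm1$ give.

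Compared with a bare citation, your route buys two things.
\begin{enumerate}
\item It makes explicit what ``for any polygon'' must mean: vertices not all collinear, and edge lines not all concurrent. Both are guaranteed by simplicity and positive area. When they fail the rank genuinely drops; for collinear vertices, for instance, $\Delta_P=0$.
\item It makes concrete the paper's remark that $\Delta_P$ can be extrapolated from ``a suitable choice of just three initial rows''. Since row $i$ equals $\tfrac12\vc{w}_iB^{T}$, the rows of any three non-collinear vertices $a,b,c$ suffice. Every other row is their combination, with coefficients equal to the barycentric coordinates of $\vc{v}_i$ with respect to the triangle on $\vc{v}_a,\vc{v}_b,\vc{v}_c$.
\end{enumerate}
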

\QED

Recall (definition~\ref{def:chords} again) that the `chord' $\Delta_{i,j}$ is bisecting if it includes an actual chord of $P$ which bisects the area of $P$. Although, in general, $\Delta_{i,j}$ will not be bisecting, there will be some point on the extension of edge $[j,j+1]$ which receives a line segment from vertex $i$ which is bisecting in the local sense. We need to identify such line segments in order to check if they fall within $\Delta_{i,j}$. So let $\angg{j}{j+1}$ denote the extension of edge $[j,j+1]$ as an infinite straight line.

We are acquiring rather a lot of  notation! In the polygon in figure~\ref{fig:triangles}, $(5,0)$ gives the coordinates of a point on polygon edge $[0,1]$ joining vertices~$0$ and~$1$; $[5,0]$ is the edge from vertex~$5$ to vertex~$0$; and $\angg{5}{0}$ is the infinite vertical line through this edge. We are allowing ourselves to write $\Delta_{5,0}=18$ because that is the area of triangle joining vertex~$5$ to edge $[0,1]$. But also, $\Delta_{5,0}$ is this triangle as the collection of all chords joining vertex~$5$ to points on edge $[0,1]$. In this sense, $\Delta_{5,0}$ is  a bisecting chord because the chord from vertex $5$ to point $(5,0)$ on edge $[0,1]$ is bisecting.

 The triangle $\Delta_{5,2}$ does not qualify as a bisecting chord because no chord from vertex~$5$ to edge $[2,3]$ can cut off half the area of the polygon. But there will be some point on $\angg{2}{3}$ that is the target of a (locally) bisecting line from vertex~$5$. Indeed $(5/3,7/3)$ is this point: the whole polygon has area~$30$ and the pentagon on point $(5/3,7/3)$ and vertices~$2, 3,4$ and~$5$ has area~$15$. The specification of such points of local bisection will be recorded in our matrix $R_P$.

\subsection{The local bisections matrix}
\label{subsection:local-bisections}
For distinct vertices $i,j$, we examine the infinite line $\angg{j}{j+1}$ to find the point where the line from vertex~$i$ becomes bisecting (in the local sense). Recall that vertex~$k$ is shorthand for  the position vector $\vc{v}_k$. For real parameter $r$ denote by $\vc{v}_j\ang{r}$ the weighted sum $\vc{v}_j(1-r)+\vc{v}_{j+1}r$. If $r\in[0,1]$ then $\vc{v}_j\ang{r}$ will lie on the actual edge $[j,j+1]$. In particular, $\vc{v}_j\ang{0}=\vc{v}_j$ and $\vc{v}_j\ang{1}=\vc{v}_{j+1}$. If $r\in[0,1]$ and line segment $\vc{v}_i$ to $\vc{v}_j\ang{r}$ is bisecting then this is when we say that $\Delta_{i,j}$ is a bisecting chord.

The translation of the data in the triangle areas matrix $\Delta_P$ into a second matrix $R_P$, which we call the local bisections matrix, begins by defining two polygons associated with $\angg{i}{j}$.
\begin{itemize}
\item $P^{+}(i,j,r)$ is formed by traversing straight line segments $v_i$ to $v_j\ang{r}$ and $v_j\ang{r}$ to $v_{j+1}$ and then following the edges of $P$  anticlockwise  from $v_{j+1}$ to $v_i$.
\item $P^{-}(i,j,r)$ is formed by traversing straight line segment $v_j\ang{r}$ to $v_i$ and then following the edges of $P$  anticlockwise  from $v_i$ to $v_j$ and finally following the straight line segment $v_j$ to $v_j\ang{r}$.
\end{itemize}
For a convex polygon  $P$, and for $r\in[0,1]$, the polygons $P^{+}$ and $P^{-}$ will partition the  area of~$P$. In general this will not be the case and the two  polygons need not even be simple since the edge sequences forming them may self-intersect. Nevertheless, we will assign areas to the two polygons in a formal sense as sums of cross products, and these areas will indeed partition the area of $P$, again in a formal sense.

 For position vectors $v_i=(x_i,y_i)$ and $v_j=(x_j,y_j)$, denote by $v_iv_j$ the cross product of $v_i$ and $v_j$ in the plane: $v_iv_j=(x_i,y_i)\times(x_j,y_j)=x_iy_j-x_jy_i$. The following formula (due to Gauss \cite{Whitty2}) is well-known but we will state it for ease of reference:
\begin{Thm}[Shoelace Formula]
\label{thm:shoelace}
Let $P$ be a simple polygon on $n$ vertices, $v_0,v_1, \ldots, v_{n-1}$, given in anticlockwise order and specified as position vectors (i.e, coordinate pairs).  Then the area $A_P$ of $P$ is given by
\begin{equation}
\label{eqn:shoelace}
A_P=\frac12\sum_{i=0}^{n-1}v_iv_{i+1},
\end{equation}
(the indices being taken mod $n$).
\end{Thm}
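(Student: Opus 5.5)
We prove the Shoelace Formula by fan triangulation from the origin, using signed triangle areas. For position vectors $a,b$, the cross product $ab=x_ay_b-x_by_a$ equals twice the signed area of the triangle with vertices $0,a,b$. The sign is positive exactly when $0,a,b$ are in anticlockwise order. This is the standard determinant fact, and it is the same sign convention the paper uses for the entries of $\Delta_P$. So the right-hand side of (\ref{eqn:shoelace}) is the sum of the signed areas of the $n$ triangles $T_i=(0,v_i,v_{i+1})$. We must show this sum equals $A_P$.

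The strategy is to compare signed areas through winding numbers. For a point $z$ not on any of the lines through $0$ and the $v_i$ or through the edges, consider
\[
\sum_i \epsilon_i(z),
\]
where $\epsilon_i(z)=\pm1$ according to the orientation of $T_i$ if $z\in T_i$, and $\epsilon_i(z)=0$ otherwise. Integrating over the plane gives $\int\sum_i\epsilon_i=\frac12\sum_i v_iv_{i+1}$. So it suffices to prove that $\sum_i\epsilon_i(z)$ equals the winding number $w(z)$ of the closed polygonal curve $v_0v_1\cdots v_{n-1}v_0$ about $z$.

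To prove this, take the ray from $z$ directed away from $0$, chosen generically. Then $z\in T_i$ exactly when this ray crosses the edge $[v_i,v_{i+1}]$. Indeed, $z$ lies in the triangle iff the segment from $0$ through $z$, extended beyond $z$, exits through the opposite side $[v_i,v_{i+1}]$. Moreover, the crossing direction of the edge relative to the ray matches the orientation sign of $T_i$. The signed count of crossings of a ray by a closed curve is the winding number, so $\sum_i\epsilon_i(z)=w(z)$.

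Finally we use that $P$ is simple with vertices in anticlockwise order. By the Jordan curve theorem for polygons, $w(z)=1$ for $z$ interior to $P$ and $w(z)=0$ for $z$ exterior. Integrating then gives $\frac12\sum_i v_iv_{i+1}=A_P$.

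The main obstacle is the last step, identifying the winding number with the indicator of the interior. That fact rests on the polygonal Jordan curve theorem together with the anticlockwise orientation, which we would cite rather than reprove. If a more elementary route is preferred, we can instead induct on $n$ via an ear decomposition. Every simple polygon with $n\ge4$ has an ear $v_{i-1}v_iv_{i+1}$ lying inside $P$. Removing it gives a simple anticlockwise polygon on $n-1$ vertices. The cross-product sums differ by
\[
v_{i-1}v_i+v_iv_{i+1}-v_{i-1}v_{i+1},
\]
which is twice the (positive) area of the ear by translation invariance of that expression. The base case $n=3$ is the determinant formula.
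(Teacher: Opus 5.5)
Your proof is correct, but the paper offers no proof to compare it with. It states the Shoelace Formula as well known, attributed to Gauss, and closes it immediately. It then adds only a cited remark that for non-simple polygons the same sum weights regions by their winding numbers.

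Your first route is valuable because of that remark. The fan decomposition plus ray-crossing count proves, before simplicity is used, the general identity $\frac12\sum_i v_iv_{i+1}=\int_{\mathbb{R}^2} w(z)\,dz$ for any closed polygonal curve. Only afterwards do you specialise, via the polygonal Jordan curve theorem, to $w=1$ inside and $w=0$ outside a simple anticlockwise polygon. That general identity is exactly what the paper relies on without proof when it assigns formal areas $A^{\pm}(i,j,r)$ to possibly self-intersecting polygons $P^{\pm}(i,j,r)$. An example is its reading of $A^-(5,3,-7)=1845/74-735/74$ as an anticlockwise pentagon minus a clockwise triangle. So your argument supplies a justification the paper only cites.

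Your genericity conditions do the needed work. With $z$ off every line through $0$ and a vertex, and off every edge line, the ray from $z$ away from $0$ misses all vertices and is never collinear with an edge. The crossing count is then well defined, and it matches "$z\in T_i$" with the correct sign.

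The ear-induction alternative is algebraically lighter. Your identity $v_{i-1}v_i+v_iv_{i+1}-v_{i-1}v_{i+1}=(v_i-v_{i-1})\times(v_{i+1}-v_{i-1})$ is right. But it buys less in two ways:
\begin{itemize}
\item It applies only to simple polygons, so it says nothing about the non-simple $P^{\pm}$ the paper later needs.
\item It does not really avoid the topology. The two-ears theorem, the containment of the ear in $P$ (so that areas subtract), and the positive orientation of the ear all rest on the same Jordan-type facts you cite in the first route.
\end{itemize}
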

 \QED

In the case of non-simple polygons the sum of products in equation~\ref{eqn:shoelace} can be interpreted as summing areas over regions weighted by their winding numbers (see, for example, \cite{Seidel}). It will be sufficient for our purposes to assign area values, as formal calculations, to our polygons $P^+(i,j, r)$ and $P^-(i,j, r)$, respectively, as follows:
 \begin{eqnarray}
 \label{eqn:AplusAminus1}
  A^+(i,j, r)&=&\frac12\left(v_iv_j\ang{r}+v_j\ang{r}v_{j+1}+\sum_{k=1}^{i-j-1} v_{j+k}v_{j+k+1}\right),\\
  \label{eqn:AplusAminus2}
  A^-(i,j, r)&=&\frac12\left(v_j\ang{r}v_i+\sum_{k=0}^{j-i-1} v_{i+k}v_{i+k+1}+v_jv_j\ang{r}\right),
\end{eqnarray}
(the indices being taken mod $n$).

Our next result says that these two formal areas, for any value of~$r$, sum to the area of $P$, while their difference, calculated at $r=0$ and suitably scaled, specifies the point on $\angg{j}{j+1}$ at which the two areas become equal. This equality, finally, is what we define to be local bisection of~$P$, and it corresponds to global, half plane, bisection when the point specified lies on edge $[j,j+1]$ and $P$ is bisection-convex.
\begin{Thm}
\label{thm:AplusAminus}
For any polygon $P$, let $A_P$ denote its area. Let $P^{+}(i,j,r)$ and $P^{-}(i,j,r)$ be defined as above, and let $A^{+}(i,j,r)$ and $A^{-}(i,j,r)$ be their respective areas as defined by equations~(\ref{eqn:AplusAminus1}) and~(\ref{eqn:AplusAminus2}). Then
\begin{enumerate}
\item  for any value of $r\in \rl$, $A^{+}(i,j,r)+A^{-}(i,j,r)=A_P$;
\item  for the value
\begin{equation}
\label{eqn:rho}
r_{i,j}= \frac{A^{+}(i,j,0)-A^{-}(i,j,0)}{2\Delta_{i,j}},
\end{equation}
we have $A^{+}(i,j,r_{i,j})=A^{-}(i,j,r_{i,j})=A_P/2.$
\end{enumerate}
\end{Thm}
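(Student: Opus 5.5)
The whole argument rests on two facts: the cross product $v_av_b$ is bilinear and antisymmetric, and $v_j\ang{r}=\vc{v}_j(1-r)+\vc{v}_{j+1}r$ is affine in $r$. We treat $A^{\pm}(i,j,r)$ purely as the formal expressions (\ref{eqn:AplusAminus1}) and (\ref{eqn:AplusAminus2}). We also use the shoelace formula, Theorem~\ref{thm:shoelace}, in the form $2A_P=\sum_{k=0}^{n-1}v_kv_{k+1}$. No geometric (simplicity or convexity) assumptions enter, which is why the statement holds ``for any polygon''.

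For part~1 we add the two formal expressions. The terms $v_iv_j\ang{r}$ and $v_j\ang{r}v_i$ cancel by antisymmetry. The remaining terms are
\[
v_jv_j\ang{r}+v_j\ang{r}v_{j+1}+\sum_{k=1}^{i-j-1}v_{j+k}v_{j+k+1}+\sum_{k=0}^{j-i-1}v_{i+k}v_{i+k+1}.
\]
Expanding by bilinearity, $v_jv_j\ang{r}=r\,v_jv_{j+1}$ because $v_jv_j=0$. Similarly $v_j\ang{r}v_{j+1}=(1-r)\,v_jv_{j+1}$. These two add to $v_jv_{j+1}$, independent of $r$.

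The two sums together run over the edges from $v_{j+1}$ round to $v_i$, and from $v_i$ round to $v_j$, all indices mod $n$. Their union with the edge $[j,j+1]$ is therefore every edge of $P$ exactly once. The total is thus $\sum_k v_kv_{k+1}=2A_P$, which gives $A^++A^-=A_P$. The only care needed is the index bookkeeping: the upper limits $i-j-1$ and $j-i-1$ must be read mod $n$ in $\{0,\dots,n-1\}$, so that the two arcs are complementary.

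For part~2 we first show that $A^+(i,j,r)$ is affine in $r$ with slope $-\Delta_{i,j}$. Only the first two terms of (\ref{eqn:AplusAminus1}) depend on $r$. By bilinearity,
\[
v_iv_j\ang{r}+v_j\ang{r}v_{j+1}=v_iv_j+v_jv_{j+1}+r\bigl(v_iv_{j+1}-v_iv_j-v_jv_{j+1}\bigr),
\]
using $v_{j+1}v_{j+1}=0$. The bracket equals $-(v_iv_j+v_jv_{j+1}+v_{j+1}v_i)=-2\Delta_{i,j}$, since by the shoelace formula $2\Delta_{i,j}$ is the signed doubled area of the triangle $v_i,v_j,v_{j+1}$. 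Hence $A^+(i,j,r)=A^+(i,j,0)-r\Delta_{i,j}$.

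By part~1, $A^-(i,j,r)=A^-(i,j,0)+r\Delta_{i,j}$. Therefore
\[
A^+-A^-=\bigl(A^+(i,j,0)-A^-(i,j,0)\bigr)-2r\Delta_{i,j},
\]
which vanishes exactly at $r=r_{i,j}$ given by (\ref{eqn:rho}). There the two areas are equal, and since they sum to $A_P$ by part~1, each equals $A_P/2$.

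The main obstacle is the sign convention: one must confirm that the triangle area in $\Delta_P$ is oriented as $(v_i,v_j,v_{j+1})$, so that the slope is $-\Delta_{i,j}$ and not $+\Delta_{i,j}$. This can be checked against figure~\ref{fig:triangles}, for instance $\Delta_{5,0}=18$. A second caveat is that (\ref{eqn:rho}) implicitly requires $\Delta_{i,j}\neq0$, so the statement should be read under that assumption. When $\Delta_{i,j}=0$ (collinear vertices, or the conventions $\Delta_{i,i}=\Delta_{i,i-1}=0$), $r_{i,j}$ is undefined and part~2 does not apply.
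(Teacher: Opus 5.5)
Your proof is correct and is essentially the paper's argument. Like the paper, you expand the formal shoelace expressions using bilinearity and antisymmetry of the cross product, so that in part~1 the $r$-dependent terms collapse to $v_jv_{j+1}$ and the full sum $2A_P$ appears, and in part~2 you solve the affine equation $A^+-A^-=0$, whose $r$-coefficient is $-2\Delta_{i,j}$. The only minor difference is that you deduce the slope of $A^-$ from part~1 instead of expanding it separately, which also avoids the sign slip on the $v_jv_{j+1}r$ term in the paper's expansion of $-2A^-$, and you state explicitly the final appeal to part~1 that gives $A_P/2$.
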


We will prove this theorem after giving some examples. Refer again to figure~\ref{fig:triangles} in which the polygon has area $30$. Consider the chords joining vertex~$5$ to edge  $[2,3]$, constituting the triangle $\Delta_{5,2}$ with area~$\Delta_{5,2}=81/8$. Now $P^{+}(5,2,0)$ is the quadrilateral on vertices $5,2,3$ and~$4$, whose area is $A^{+}(5,2,0)=12$ (indeed, it is the sum of $\Delta_{5,2}$ and $\Delta_{5,3}$). And $P^{-}(5,2,0)$ is  the quadrilateral on vertices $2,5,0$ and~$1$, whose area is $A^{+}(5,2,0)=18$. By equation~(\ref{eqn:rho}), $r_{5,2}=(12-18))/(2\times 81/8)=-8/27$. Thus, the local bisection point for a line segment from vertex~$5$ to extended edge $\angg{2}{3}$ is $\vc{v}_2\ang{r_{5,2}}=(3,3)\cdot\left(1-r_{5,2}\right)+(15/2,21/4)\cdot r_{5,2}=(5/3,7/3).$ This is the point we identified at the end of section~\ref{subsection:triangleareas} as locally bisecting for vertex~$5$ on edge $[2,3]$.

As an example of a non-simple $P^-$ polygon, consider the local bisection for vertex~$5$ on extended edge $\angg{3}{4}$. We calculate the value $r_{5,3}=-7$, placing the local bisection point at the intersection of $\angg{3}{4}$ with the horizontal axis, at $(25,0)$. Now $P^+(5,3,-7)$ is a triangle  with height~6 and base~5 (apex at $(25,0)$, base  $[4,5]$) so $A^+(5,3,-7)=15$. And $P^-(5,3,-7)$ comprises an anticlockwise non-convex pentagon followed by a clockwise triangle with $A^-(5,3,-7)=1845/74-735/74=15$.

{\bf Proof of theorem~\ref{thm:AplusAminus}}
 Recall that $v_iv_i=v_i\times v_i=0$ and $v_iv_j+v_jv_i=v_i\times v_j+v_j\times v_i=0$; also our notation $\vc{v}_j\ang{r}=\vc{v}_j(1-r)+\vc{v}_{j+1}r$.
\begin{enumerate}\item From equations~(\ref{eqn:AplusAminus1}) and~(\ref{eqn:AplusAminus2}), simplifying notation by multiplying through by 2:
\begin{eqnarray*} 2(A^{+}(i,j,r)+A^{-}(i,j,r))&=&  v_iv_j\ang{r}+v_j\ang{r}v_{j+1}+v_{j+1}v_{j+2}+\ldots +v_{i-1}v_i  \\
                                           & &+ v_j\ang{r}v_i+v_iv_{i+1}+ \ldots +v_{j-1}v_j+v_jv_j\ang{r}\\
                                           &=& v_j\ang{r}v_{j+1}+v_{j+1}v_{j+2}+\ldots +v_{i-1}v_i+v_iv_{i+1}+ \ldots +v_{j-1}v_j +v_jv_j\ang{r} \\
                                           &=&  \left(v_j(1-r)+v_{j+1}r\right)\times v_{j+1}+\left(\sum_{k=j+1}^{j-1}v_kv_{k+1}\right)+v_j\times\left(v_j(1-r)+v_{j+1}r\right) \\
                                           &=& \left(\sum_{k=j+1}^{j-1}v_kv_{k+1}\right)+v_j(1-r)v_{j+1}+v_jv_{j+1}r \\
                                           &=& \sum_{k=j+1}^{j}v_kv_{k+1} =2A_P \mbox{, by theorem~\ref{thm:shoelace}}.
\end{eqnarray*}
\item A similar calculation gives
\begin{eqnarray}\label{eqn:AA1}    2A^{+}(i,j,r) &=&  v_iv_j(1-r)+v_iv_{j+1}r+v_jv_{j+1}(1-r)+ \sum_{k=j+1}^{i-1}v_kv_{k+1} \\
                 \label{eqn:AA2}  -2A^{-}(i,j,r))&=&  -v_jv_i(1-r)-v_{j+1}v_ir+v_jv_{j+1}r- \sum_{k=i}^{j-1}v_kv_{k+1}
\end{eqnarray}
 Add (\ref{eqn:AA1}) and (\ref{eqn:AA2}) and set the result equal to zero:
 \begin{eqnarray*}
                               0   &=& r(-v_iv_j+v_iv_{j+1}-v_jv_{j+1})+v_iv_j+v_jv_{j+1}  + \sum_{k=j+1}^{i-1}v_kv_{k+1} \\
                                          & & +r(v_jv_i-v_{j+1}v_i-v_jv_{j+1})-v_jv_i-  \sum_{k=i}^{j-1}v_kv_{k+1}\\
                     \mbox{Solve for $r$}:&&\\
                                         r  &=& \frac{v_iv_j+v_jv_{j+1}-v_jv_i+  \sum_{k=j+1}^{i-1}v_kv_{k+1}-\sum_{k=i}^{j-1}v_kv_{k+1}}{2v_iv_j+2v_{j+1}v+i+2v_jv_{j+1}}  \\
                                         &=&\frac{v_iv_j+v_jv_{j+1}+\sum_{k=j+1}^{i-1}v_kv_{k+1}-\left(v_jv_i +\sum_{k=i}^{j-1}v_kv_{k+1}\right)}{4\Delta_{i,j}}\\
                                         &=&\frac{A^+(0,i,j)-A^-(0,i,j)}{2\Delta_{i,j}}.
\end{eqnarray*}
\end{enumerate}
\QED

\subsection{Populating the local bisections matrix}
Our strategy for identifying bisecting chords, then, is to take each polygon vertex in turn and test each chord from that vertex in turn to identify which is bisecting. To do this we need to calculate all the corresponding $r_{i,j}$ values to identify which lie in the interval $[0,1]$. This would be very laborious if we had to repeatedly apply equation~(\ref{eqn:rho}), finding all  the necessary auxiliary polynomials $P^{+}$ and $P^{-}$ and their areas!  Luckily, cycling around the polygon, these areas are closely inter-related by  recurrences specified using the triangle areas~$\Delta_{i,j}$:

\begin{Prop}
\label{prop:feedforward}
Let $P$ have area $A_P$. For $i=0,\ldots,n-1$ and $j=i+1,\ldots,i+n-2$,
\begin{equation}
r_{i,j}=\left\{\begin{array}{ccl}
A_P/2\Delta_{i,j}&&\mbox{if }j=i+1 \mbox{ and }  \Delta_{i,j}\neq 0 \\
(r_{i,j-1}-1)\Delta_{i,j-1}/\Delta_{i,j}&&\mbox{if }j\geq i+2\mbox{ and both }\Delta_{i,j} \mbox{ and }\Delta_{i,j-1} \mbox{ are nonzero}.
\end{array}
\right.
\end{equation}
\end{Prop}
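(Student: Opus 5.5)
The plan is to replace the ratio in equation~(\ref{eqn:rho}) by an expression involving $A^{+}$ alone, and then to compare $A^{+}$ for two consecutive edges. Both the base case and the recurrence should then follow in one line each.

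\textbf{Step 1: $A^{+}$ is affine in $r$.} From equation~(\ref{eqn:AA1}), $2A^{+}(i,j,r)$ is affine in $r$, with slope
\[
-v_iv_j+v_iv_{j+1}-v_jv_{j+1}=-(v_iv_j+v_jv_{j+1}+v_{j+1}v_i).
\]
By the Shoelace Formula applied to the triangle on $v_i,v_j,v_{j+1}$, this slope equals $-2\Delta_{i,j}$, with the sign convention of section~\ref{subsection:triangleareas}. Hence
\[
A^{+}(i,j,r)=A^{+}(i,j,0)-r\Delta_{i,j}.
\]

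\textbf{Step 2: rewrite $r_{i,j}$.} By Theorem~\ref{thm:AplusAminus}(1), $A^{+}-A^{-}=2A^{+}-A_P$. Substituting this into equation~(\ref{eqn:rho}) gives, whenever $\Delta_{i,j}\neq0$,
\[
r_{i,j}\,\Delta_{i,j}=A^{+}(i,j,0)-A_P/2.
\]
Equivalently, $r_{i,j}$ is the unique root of $A^{+}(i,j,r)=A_P/2$, which agrees with Step~1.

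\textbf{Step 3: the base case $j=i+1$.} Put $j=i+1$ and $r=0$ in equation~(\ref{eqn:AplusAminus1}). The terms $v_iv_{i+1}+v_{i+1}v_{i+2}+\sum_{k=1}^{n-2}v_{i+1+k}v_{i+2+k}$ run through every edge of $P$ exactly once. So $A^{+}(i,i+1,0)=A_P$ by Theorem~\ref{thm:shoelace}, and Step~2 gives $r_{i,i+1}=A_P/2\Delta_{i,i+1}$.

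\textbf{Step 4: the recurrence for $j\geq i+2$.} Compare the cross-product sums for $2A^{+}(i,j-1,0)$ and $2A^{+}(i,j,0)$. The first is $v_iv_{j-1}+v_{j-1}v_j+(\text{edges from } j \text{ to } i)$ and the second is $v_iv_j+(\text{edges from } j \text{ to } i)$. Their difference is $v_iv_{j-1}+v_{j-1}v_j+v_jv_i=2\Delta_{i,j-1}$, so
\[
A^{+}(i,j,0)=A^{+}(i,j-1,0)-\Delta_{i,j-1}.
\]
Applying Step~2 at $j$ and at $j-1$ then yields
\[
r_{i,j}\Delta_{i,j}=r_{i,j-1}\Delta_{i,j-1}-\Delta_{i,j-1},
\]
and dividing by $\Delta_{i,j}\neq0$ gives the stated formula. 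The hypothesis $\Delta_{i,j-1}\neq0$ is needed so that $r_{i,j-1}$ is defined.

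\textbf{Main obstacle.} The only delicate point is the index bookkeeping modulo $n$. One must check that the sum in equation~(\ref{eqn:AplusAminus1}) for $j-1$ contains exactly one more edge, $[j-1,j]$, than the sum for $j$. One must also handle the endpoint $j=i+n-2=i-2$, where the sum for $j$ has a single term. I would verify this by writing the sum explicitly as $\sum_{k=j+1}^{i-1}v_kv_{k+1}$ over the anticlockwise range, as in equation~(\ref{eqn:AA1}), which makes the telescoping transparent. A minor side issue is that the denominator in the proof of Theorem~\ref{thm:AplusAminus} contains a typo; I would avoid relying on it by deriving the slope directly in Step~1.
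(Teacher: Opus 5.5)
Your proof is correct. Its skeleton matches the paper's: the base case rests on \(A^{+}(i,i+1,0)=A_P\), and the recurrence on the identity \(A^{+}(i,j,0)=A^{+}(i,j-1,0)-\Delta_{i,j-1}\).

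What differs is how you obtain that identity. The paper argues geometrically. It asserts that at \(r=0\) the quantities \(A^{\pm}\) ``may be treated as Euclidean areas'', splits into cases by the sign of \(\Delta_{i,j-1}\), and updates \(A^{+}\) and \(A^{-}\) separately. You instead telescope the cross-product sums in equation~(\ref{eqn:AplusAminus1}) directly. You then use Theorem~\ref{thm:AplusAminus}(1) to eliminate \(A^{-}\) altogether, which gives \(r_{i,j}\Delta_{i,j}=A^{+}(i,j,0)-A_P/2\).

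Your route buys two things. First, no sign case split is needed, because the signed-area convention is built into the cross products. Second, nothing depends on interpreting \(P^{+}(i,j,0)\) as a genuine region. That matters: for non-convex \(P\) the segment from \(v_i\) to \(v_j\) can cross other edges of \(P\), so \(P^{+}(i,j,0)\) need not be simple, as the paper itself remarks. The paper's appeal to Euclidean area is exactly where its argument is loose, whereas your identity holds for any closed polygonal chain. In return, the paper's version gives the intuitive picture of the chord sweeping across the triangle \(\Delta_{i,j-1}\).

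Your Step~1 is not actually needed, since \(r_{i,j}\) is defined by~(\ref{eqn:rho}) and Step~2 follows from Theorem~\ref{thm:AplusAminus}(1) alone. It is still a correct consistency check.
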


{\bf Proof.}
Let $j=i+1$ and suppose that $\Delta_{i,j}\neq 0$. Then $P^{+}(i,j,0)$ is the whole of $P$, while $P^{-}(i,j,0)$ is the empty polygon. So $\displaystyle r_{i,j}=\frac{A^{+}(i,j,0)-A^{-}(i,j,0)}{2\Delta_{i,j}}=(A_P-0)/(2\Delta_{i,j})$.

Suppose that $j\geq i+2$ and $\Delta_{i,j-1},\Delta_{i,j}\neq 0$, so $r_{i,j-1}$ and $r_{i,j}$ are defined. Let $A^{\pm}_{x,y}$ denote $A^{\pm}(x,y,0)$. Despite their formal definitions in equations~(\ref{eqn:AplusAminus1}) and~(\ref{eqn:AplusAminus2}) these areas may be treated as Euclidean areas because they are defined in terms of edge $[j,j+1]$ rather than extended edge $\angg{j}{j+1}$.

There are two configurations, according to the sign of $\Delta_{i,j-1}$:
\begin{description}
\item[\boldmath $\Delta_{i,j-1}>0$:]
 Then $A^{+}_{i,j}=A^{+}_{i,j-1}-\Delta_{i,j-1}$ and $A^{-}_{i,j}=A^{-}_{i,j-1}+\Delta_{i,j-1}$. So
 $$r_{i,j}=\frac{A^{+}_{i,j}-A^{-}_{i,j}}{2\Delta_{i,j}}=\frac{(A^{+}_{i,j-1}-\Delta_{i,j-1})-(A^{-}_{i,j-1}+\Delta_{i,j-1})}{2\Delta_{i,j}}=(r_{i,j-1}-1)\Delta_{i,j-1}/\Delta_{i,j}.$$
 \item[\boldmath$\Delta_{i,j-1}<0$:] Then $A^{+}_{i,j}=A^{+}_{i,j-1}+|\Delta_{i,j-1}|=A^{+}_{i,j-1}-\Delta_{i,j-1}$. And $A^{-}_{i,j}-A^{-}_{i,j-1}=\Delta_{i,j-1}$, and the calculation becomes the same as case~1.
\end{description}
\QED

We require a supplement to Proposition~\ref{prop:feedforward} to take account of zero values among the $\Delta_{i,j}$,  the result of three or more  vertices being collinear. The following says that such zero-area triangles may be `skipped over':
\begin{Prop}
\label{prop:skip}
Suppose that $\Delta_{i,j}=0$ but that $\Delta_{i,j-1}$ and $\Delta_{i,j+1}$ are both nonzero. Then $$r_{i,j+1}=(r_{i,j-1}-1)\Delta_{i,j-1}/\Delta_{i,j+1}.$$
\end{Prop}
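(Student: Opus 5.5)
Taking the definition of $r_{i,j}$ in equation~(\ref{eqn:rho}) at face value, the idea is to express the numerators $A^{+}_{i,j}-A^{-}_{i,j}$ (writing $A^{\pm}_{x,y}$ for $A^{\pm}(x,y,0)$ as in the proof of Proposition~\ref{prop:feedforward}) in terms of one another across two steps. The key identity is the telescoping relation obtained directly from the formal definitions (\ref{eqn:AplusAminus1}) and (\ref{eqn:AplusAminus2}) at $r=0$:
\begin{equation*}
2A^{+}_{i,j}-2A^{+}_{i,j-1} = v_iv_j+\sum_{k=1}^{i-j-1}v_{j+k}v_{j+k+1} - v_iv_{j-1}-\sum_{k=1}^{i-j}v_{j-1+k}v_{j+k} = v_iv_j - v_iv_{j-1}-v_{j-1}v_j = -2\Delta_{i,j-1},
\end{equation*}
using $2\Delta_{i,j-1}=v_iv_{j-1}+v_{j-1}v_j+v_jv_i$. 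By part~1 of Theorem~\ref{thm:AplusAminus}, $A^{-}_{i,j}-A^{-}_{i,j-1}=+\Delta_{i,j-1}$. This derivation is purely algebraic, so it holds for any sign of $\Delta_{i,j-1}$, including zero; it does not need the Euclidean case analysis used in Proposition~\ref{prop:feedforward}.

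Hence the numerator $N_j:=A^{+}_{i,j}-A^{-}_{i,j}$ satisfies $N_j=N_{j-1}-2\Delta_{i,j-1}$ for every $j$ in range, regardless of whether the triangle areas vanish. Applying this twice gives
\begin{equation*}
N_{j+1}=N_{j-1}-2\Delta_{i,j-1}-2\Delta_{i,j}=N_{j-1}-2\Delta_{i,j-1},
\end{equation*}
since $\Delta_{i,j}=0$ by hypothesis. Because $\Delta_{i,j-1}\neq0$ and $\Delta_{i,j+1}\neq0$, both $r_{i,j-1}=N_{j-1}/(2\Delta_{i,j-1})$ and $r_{i,j+1}=N_{j+1}/(2\Delta_{i,j+1})$ are defined. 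Substituting $N_{j-1}=2r_{i,j-1}\Delta_{i,j-1}$ gives $r_{i,j+1}=(r_{i,j-1}-1)\Delta_{i,j-1}/\Delta_{i,j+1}$, as claimed.

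The main obstacle is the index bookkeeping in the telescoping step. One must check that the sum in (\ref{eqn:AplusAminus1}) for $j-1$ contains exactly the extra edge term $v_{j-1}v_j$ beyond the sum for $j$, modulo $n$, and that the range conditions $j-1\geq i+1$ and $j+1\leq i+n-2$ keep all three indices non-incident to $i$. The boundary case $j-1=i+1$, where $P^{-}$ is degenerate, should be checked separately. There $N_{j-1}=A_P$, which is still consistent with the identity, so the argument goes through unchanged.
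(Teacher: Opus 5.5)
Your proof is correct, and it takes a different route from the paper's.

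\textbf{The paper's route.} The paper argues only ``in the limit''. It applies the recurrence of Proposition~\ref{prop:feedforward} twice, $r_{i,j-1}\to r_{i,j}\to r_{i,j+1}$, as though $\Delta_{i,j}$ were a nonzero variable. This gives $r_{i,j+1}=(r_{i,j-1}-1)\Delta_{i,j-1}/\Delta_{i,j+1}-\Delta_{i,j}/\Delta_{i,j+1}$, and it then lets $\Delta_{i,j}\to 0$. It only remarks that a formal proof could be written via the areas $A^{\pm}$.

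\textbf{Your route.} You supply that formal proof, and more cleanly than the paper's own proof of Proposition~\ref{prop:feedforward}:
\begin{enumerate}
\item You work with the numerators $N_j=A^+_{i,j}-A^-_{i,j}$ rather than the ratios $r_{i,j}$, so you never divide by the vanishing $\Delta_{i,j}$.
\item $N_j=N_{j-1}-2\Delta_{i,j-1}$ is an exact cross-product identity, combined with part~1 of Theorem~\ref{thm:AplusAminus}. So you need neither the sign case analysis nor the claim that the $A^{\pm}_{i,j}$ are genuine Euclidean areas.
\end{enumerate}

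\textbf{What each buys.} The paper's version is shorter and reuses Proposition~\ref{prop:feedforward} verbatim. To be rigorous, though, it would have to perturb an actual vertex, which also moves $r_{i,j-1}$ and $\Delta_{i,j\pm 1}$, and then appeal to continuity. Your identity shows why that limit is harmless: $(r_{i,j}-1)\Delta_{i,j}=N_j/2-\Delta_{i,j}$ is a polynomial in the coordinates and stays meaningful when $\Delta_{i,j}=0$. Your route also proves Proposition~\ref{prop:feedforward} for all signs at once. It extends immediately to runs of consecutive zero entries $\Delta_{i,j}=\dots=\Delta_{i,j+m-1}=0$.

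\textbf{A slip to repair.} In your displayed telescoping step you dropped the term $v_j\ang{0}v_{j+1}=v_jv_{j+1}$ of (\ref{eqn:AplusAminus1}) from $2A^+_{i,j}$, and likewise $v_{j-1}v_j$ from $2A^+_{i,j-1}$. As written, your middle expression equals $v_iv_j-v_iv_{j-1}-v_jv_{j+1}$, not what you claim. With those terms restored, $2A^+_{i,j}=v_iv_j+\sum_{m=j}^{i-1}v_mv_{m+1}$ (indices mod $n$), and similarly for $j-1$. The difference is then indeed $v_iv_j-v_iv_{j-1}-v_{j-1}v_j=-2\Delta_{i,j-1}$, as you state.

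The boundary case $j-1=i+1$ is already covered by this identity and needs no separate check.
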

{\bf Proof.} We give an informal argument `in the limit', calculating $r_{i,j+1}$ from $r_{i,j-1}$, applying Proposition~\ref{prop:feedforward} to a `perturbation' of our polynomial in which $\Delta_{i,j}$ behaves as a variable.
\begin{eqnarray*}
r_{i,j+1}&=& (r_{i,j}-1)\frac{\Delta_{i,j}}{\Delta_{i,j+1}}\\
&=&\left((r_{i,j-1}-1)\frac{\Delta_{i,j-1}}{\Delta_{i,j}}-1\right)\frac{\Delta_{i,j}}{\Delta_{i,j+1}}\\
&=&r_{i,j-1}\frac{\Delta_{i,j-1}}{\Delta_{i,j+1}}-\frac{\Delta_{i,j-1}}{\Delta_{i,j+1}}-\frac{\Delta_{i,j}}{\Delta_{i,j+1}}\\
&=&(r_{i,j-1}-1)\frac{\Delta_{i,j-1}}{\Delta_{i,j+1}}, \mbox{ as }\Delta_{i,j}\rightarrow 0.
\end{eqnarray*}
\QED

(A more formal proof may be written down by investigating the polygon areas $A^{+}$ and $A^{-}$ involved, as in the proof of proposition~\ref{prop:feedforward}.)

We can illustrate the use of proposition~\ref{prop:feedforward} using the  bisection-convex polygon of figure~\ref{fig:bisectionconvex}(a), reproduced in figure~\ref{fig:rho-vals} together with its matrix $R_P$ of $r_{i,j}$ values.
\begin{figure}[h]
\hspace{.1in}\parbox[c]{3in}{\includegraphics[scale=0.30]{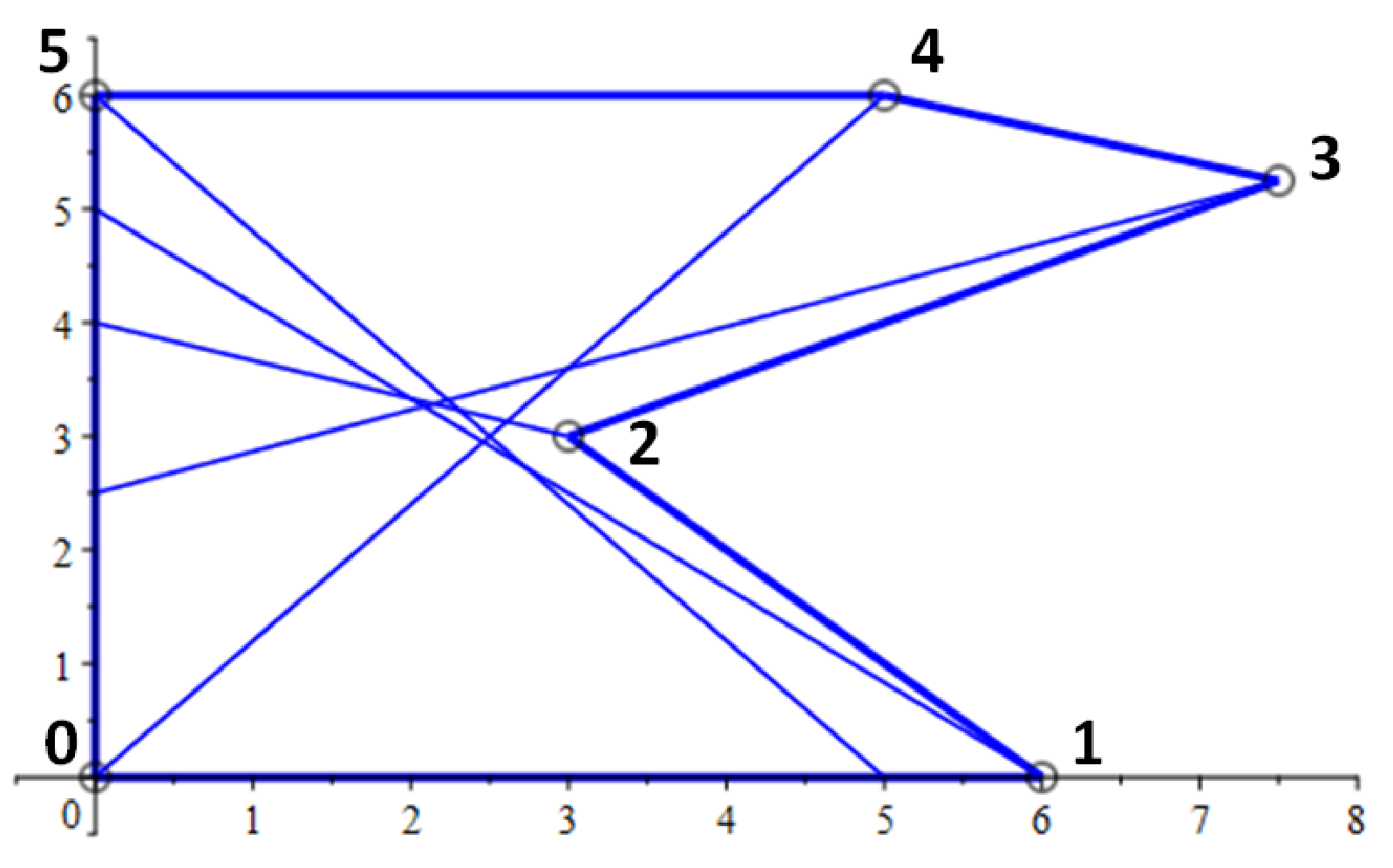}}\hspace{.3in}\parbox[c]{3in}{$R_P=\bbordermatrix{ & 0 & 1 & 2 & 3 & 4 & 5  \cr
0 & 0&{\frac{5}{3}}&-{\frac{16}{9}}&1&0&0
\cr
1& 0&0&-{\frac{40}{27}}&{\frac{67}{19}}&{\frac{6}{5}
}&{\frac{1}{6}}\cr
2 & -{\frac{2}{3}}&0&0&{\frac{10}{3}}&
{\frac{7}{5}}&{\frac{1}{3}}\cr
3 & -{\frac{25}{42}}&{
\frac{67}{27}}&0&0&8&{\frac{7}{12}}\cr
4 & 0&{\frac{12}{5
}}&-{\frac{7}{3}}&0&0&1\cr
5 & {\frac{5}{6}}&0&-{\frac{8}
{27}}&-7&0&0
}
$}

\caption{Bisection-convex polygon of figure~\ref{fig:bisectionconvex}(a) and its local bisections matrix $R_P$ of $r_{i,j}$ values.}
\label{fig:rho-vals}
\end{figure}

The entries in the first upper diagonal   are the reciprocals of the corresponding entries of $\Delta_P$ multiplied by half the area $A_P=30$ of the polygon (figure~\ref{fig:triangles}). E.g. $r_{0,1}=(1/9)\times(15)=5/3$. And now~$r_{0,2}$, for example, is calculated as
$r_{0,2}=(r_{0,1}-1)\Delta_{0,1}/\Delta_{0,2}=(2/3)\times 9/(-27/8)=-16/9.$ Proposition~\ref{prop:skip} is applied in the final row of $R_P$: from the first upper diagonal entry which  (mod 6) is $r_{5,0}= 5/6$ we must skip to $r_{5,2}$ which, by proposition~\ref{prop:skip} is given by $r_{5,2}=\left(r_{5,0}-1\right)\Delta_{5,0}/\Delta_{5,2}= -8/27.$

\subsection{Pivots}
For a bisection-convex polygon $P$, in each row of the local bisections matrix $R_P$ there is precisely one entry in the interval $(0,1]$  and each such entry corresponds to a bisecting chord of the polygon. We are careful to specify a half-open interval because, apart from the diagonal and subdiagonal zeros, there are zeros arising from collinear vertices (as excluded from equation~(2) in proposition~\ref{prop:feedforward}). In our example, in figure~\ref{fig:rho-vals}, $r_{5,1}=0$ because $\Delta_{5,1}=0$ due to the fact that vertices $5,1$ and~$2$ are collinear.

And then there are zeros which correspond to  bisecting chords. This latter case signifies a vertex-vertex chord bisecting the area of~$P$ so that, say, $r_{i,j}=1$ identifies the second vertex (anticlockwise) of edge $[j,j+1]$ and $r_{i,j+1}=0$ identifies the first vertex of edge $[j+1,j+2]$. This 0-1 pair will have a corresponding pair of entries in row~$j+1$ with $r_{j+1,i-1}=1$ and $r_{j+1,i}=0$. In  figure~\ref{fig:rho-vals}, the vertex-vertex bisecting chord from vertex~$0$ to vertex~$4$ corresponds to the four entries in $R_P$: $r_{0,3}=1, r_{0,4}=0, r_{4,5}=1, r_{4,0}=0$. Each vertex-vertex bisecting chord features four times in $R_P$, twice as a unit and twice as a zero. We are going to restrict ourselves to one of each so that we can associate each bisecting chord with a unique entry in $R_P$. These entries thus identify the chords that will pivot to give us bisection angles on demand.

We want a definition, then, of those  entries in $R_P$ which identify `pivot' chords, that is, the $r_{i,j}$ that signal  that triangle $\Delta_{i,j}$ is a bisecting chord. The easy case is $r_{i,j}\in (0,1)$: these qualify automatically. Unit entries are more subtle because they come in pairs and we must unambiguously choose one from each pair. In the next subsection we are going to order our bisecting chords to form a half-circle and we will do this by scanning the entries of $R_P$ along its antidiagonals. This turns out to be also a convenient way of distinguishing between unit entries. The antidiagonals are specified by ordering the row and column pairs as follows:
\begin{equation}
\label{eqn:antidiag}
((i,m-i\!\!\mod n),i=0,\ldots, n-1), m=0,\ldots,n-1.
\end{equation}
The $R_P$ entry in row~$i$ and column~$j$ is the $i$-th entry on antidiagonal $m=i+j$. Now two unit entries corresponding to the same bisecting chord are $r_{i,j}=1$ and  $r_{j+1,i-1}=1$ and therefore lie on the same antidiagonal. We will choose  the unit appearing first on this antidiagonal. The bisecting chord originating at the other end of the same vertex-vertex pair we choose to represent  by its zero entry in $R_P$. This will be the second of two zeros on the same antidiagonal which, in addition, immediately (mod $n$) follows  a unit entry.
Referring back to figure~\ref{fig:rho-vals}, the two unit entries lie on the same  antidiagonal $m=3$ at $i=0$ and $i=4$. We choose the $r_{0,3}=1$ and $r_{4,0}=0$ to represent the two orientations of this vertex-vertex chord.

We translate this back into row-column `coordinates' in  the following definition:

\begin{Def}
\label{def:pivot}
 A row, column pair $(i,j)$ for the local bisections matrix $R_P$ will be called a {\bf pivot} if $0<r_{i,j}<1$, or if
$$ r_{i,j}=\left\{\begin{array}{lcl}
 1 & \mbox{and} & i<j, \mbox{ or}\\
 0 & \mbox{and} &  j<i  \mbox{ and } r_{i,j-1}=1
 \end{array}\right.
$$
\end{Def}
(Strictly speaking, the first of the two unit entries $r_{i,j}=1$ and  $r_{j+1,i-1}=1$ satisfies $i<j+1$ but we cannot have a unit entry with $i=j$ so we can choose the neater $i<j$ in our definition).

We have
\begin{Prop}
\label{prop:pivots}
If $P$ is a bisection-convex polygon then $R_P$ has a unique pivot in each row and on each antidiagonal.
\end{Prop}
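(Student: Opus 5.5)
Our plan is to reduce both claims to a bijection between pivots and bisecting lines through vertices, and then read off the position of each pivot in $R_P$. By Theorem~\ref{Thm:bisection-convex}, $P$ has exactly one bisecting chord from each vertex $i$. By Theorem~\ref{thm:AplusAminus}, for each $j$ with $\Delta_{i,j}\neq 0$ the value $r_{i,j}$ is the unique parameter for which the segment from $\vc{v}_i$ to $\vc{v}_j\ang{r_{i,j}}$ bisects locally. We use the observation made in the paper that, for bisection-convex $P$ and $r\in[0,1]$, local and global bisection coincide: the areas $A^\pm$ are then genuine Euclidean areas of the two sides of a line meeting $\partial P$ in exactly two points.

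\textbf{Row claim.} It follows that the entries $r_{i,j}\in[0,1]$ with $\Delta_{i,j}\neq0$ are exactly the representations of the unique bisecting line through $\vc{v}_i$.

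\begin{itemize}
\item If that line meets the interior of an edge $[j,j+1]$, only $r_{i,j}\in(0,1)$ occurs in row $i$. Any other entry in $[0,1]$ would give a second bisecting line through $\vc{v}_i$. Two distinct bisecting lines through $\vc{v}_i$ would have to meet at $\vc{v}_i$, in the interior of the convex hull by Lemma~\ref{lm:intersecting}; this is impossible when $\vc{v}_i$ is a hull vertex. More directly, it contradicts uniqueness of the bisecting chord from $i$.
\item If the line hits a vertex $k$, it appears exactly as the pair $r_{i,k-1}=1$, $r_{i,k}=0$. Zero entries coming from collinear triples are excluded, since pivots require $\Delta_{i,j}\neq0$ implicitly, or equivalently the recurrences of Propositions~\ref{prop:feedforward} and~\ref{prop:skip} skip them.
\end{itemize}

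Definition~\ref{def:pivot} then selects one of the two entries of the pair. The unit entry is chosen when $i<k-1$ (in the sense of the antidiagonal ordering (\ref{eqn:antidiag})). Otherwise the zero is chosen, because the partner vertex $k$ carries the unit on the same antidiagonal $m=i+k-1$ with smaller row index. We must check that exactly one of ``$i<k-1$'' and ``$k<i$ and $r_{i,k-1}=1$'' holds. This reduces to the dichotomy $i<k$ versus $k<i$ for the distinct vertices $i,k$, once the mod-$n$ conventions are fixed.

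\textbf{Antidiagonal claim.} Here we use Lemma~\ref{lm:circling} and the structure found in the proof of Theorem~\ref{Thm:bisection-convex}. Consecutive bisecting chords $c,c'$ have endpoints $i,i',j,j'$ with no polygon vertex strictly between $i$ and $i'$, or between $j$ and $j'$. So as we rotate around the half circle, the chord from vertex $i$ into edge $[j,j+1]$ is followed by the chord from the next vertex on one of the two sides.

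\begin{itemize}
\item If the next chord starts at $i+1$, its far end moves forward, landing on edge $j$ or $j+1$ (or crossing a vertex).
\item Alternatively, the roles of the two ends swap: the far endpoint $j+1$ becomes a vertex sending a chord back towards edge $[i,i+1]$.
\end{itemize}

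We will track the quantity $m=i+j \bmod n$ through these moves. Our first attempt will show that each elementary move, namely a vertex being passed on one side, changes $m$ by exactly $1$. Over the full turn of $\tau$ (the half circle traversed twice, once in each orientation) this gives exactly $n$ pivots with $m$ taking each residue once. The count works because the $n$ pivots are distributed one per row and there are $n$ antidiagonals. In the vertex-vertex case, the consistent unit/zero choice is exactly what places the two orientations on adjacent antidiagonals rather than the same one.

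\textbf{Main obstacle.} I expect the hardest step to be this bookkeeping: proving that successive pivots have antidiagonal indices differing by exactly one mod $n$. This needs careful mod-$n$ case analysis, especially at vertex-vertex chords and at collinear zeros, where the skip rule of Proposition~\ref{prop:skip} shifts the column. If the increment argument resists, the fallback is injectivity. We would show that two pivots on the same antidiagonal force two bisecting chords whose endpoints interleave in a way forbidden by Lemma~\ref{lm:intersecting} together with the no-vertex-between property. Injectivity then gives uniqueness by counting, since there are $n$ pivots and $n$ antidiagonals.
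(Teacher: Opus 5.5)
Your row argument is the paper's own: one bisecting chord per vertex from Theorem~\ref{Thm:bisection-convex}, represented by one pivot. The antidiagonal claim, however, is never proved. Both of your routes are only sketched, and each is misstated at the decisive point.

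In the increment route you let the far end of the next chord land on edge $[j+1,j+2]$ or cross a vertex. Either would move $m=i+j$ by $2$, not $1$. What excludes this is precisely the no-vertex-between property you quote from the proof of Theorem~\ref{Thm:bisection-convex}:
\begin{itemize}
\item if the next chord starts at $\vc{v}_{i+1}$, its far end is still on $[j,j+1]$ (possibly at $\vc{v}_{j+1}$), giving $(i+1,j)$;
\item otherwise it starts at $\vc{v}_{j+1}$ and ends on $[i,i+1]$, giving $(j+1,i)$.
\end{itemize}
Both pairs lie on antidiagonal $i+j+1$. With your unit-before-zero ordering of a vertex-vertex chord, the step is therefore always exactly one. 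The count must then be run over the $\tau/2$ of Lemma~\ref{lm:circling}, where each vertex is passed exactly once; over a full turn $\tau$ every vertex is passed twice. The $n$ pivots then occupy $n$ consecutive residues. Completed this way, your route is a genuine alternative to the paper's. It derives the content of Proposition~\ref{prop:torus} directly from the geometry and deduces the antidiagonal claim from it. The paper goes the other way: it proves the antidiagonal claim first and then uses it to obtain Proposition~\ref{prop:torus}.

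Your fallback, which is the paper's route, has the geometry backwards. Two pairs on one antidiagonal are $(i,j)$ and $(i+d,j-d)$. Their boundary endpoints are \emph{nested}, not interleaved, so the two chords cannot cross. What is forbidden is exactly this non-crossing: two bisecting chords must cross, by the four-region argument of Lemma~\ref{lm:intersecting}, since otherwise the quadrant opposite the region between them contains no area of $P$. The paper expresses this nesting as parallel chords of a regular $n$-gon. Together with the pivot rule excluding the oppositely oriented pair, it gives at most one pivot per antidiagonal, and $n$ pivots on $n$ antidiagonals then give exactly one.

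Finally, your claim that the unit/zero choice ``reduces to $i<k$ versus $k<i$'' fails under the literal Definition~\ref{def:pivot} when the far vertex is $k=0$. In that case $r_{i,n-1}=1$ with $i<n-1$ and $r_{i,0}=0$ both qualify; in the paper's example, $(4,5)$ and $(4,0)$ would both be pivots. You need the intended reading, ``the unit that comes first on its antidiagonal'', i.e.\ $i<(j+1 \bmod n)$.
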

{\bf Proof.} Each pivot in $R_P$ corresponds to a distinct bisecting chord of $P$. It follows immediately from theorem~\ref{Thm:bisection-convex} that there are $n$ pivots and that each row contains one and only one pivot. Now the ordered pairs in expression~(\ref{eqn:antidiag}) may be taken to be the edges in a complete directed graph on $n$ vertices. Specifically, each vertex has a directed edge to every vertex including itself. The antidiagonals partition these edges into equal subsets of size~$n$. Embed the graph in the plane as a regular $n$-gon. Then for each antidiagonal, any two edges are either disjoint or  are concurrent: they  are the same edge oriented oppositely. By definition only one of a pair of concurrent edges may represent a pivot, and by lemma~\ref{lm:intersecting}, no two bisecting chords may be represented by disjoint edges. Therefore, no antidiagonal in $R_P$ can contain more than one pivot. Since there are $n$ bisecting chords each antidiagonal of~$P$ must contain at least one pivot.\newline\mbox{\ }\QED

It would be nice if the converse of proposition~\ref{prop:pivots} held  to give a neat characterisation of bisection convexity but unfortunately this is not the case. Certainly, for the non-bisection-convex polygon of figure~\ref{fig:bisectionconvex}(b) the first row of $R_P$ is found to have three values in the interval $(0,1)$. But if the vertex~$2$ is moved to point $(2,4)$ the result is still a non-bisection-convex polygon but now every row and antidiagonal of the new local bisections matrix has a single $(0,1)$ entry. We  resort to theorem~\ref{Thm:bisection-convex} as our means of checking bisection-convexity, the details being given in subsection~\ref{subsection:bc-testing}.

\subsection{Bisecting chords: ordering as direction vectors}
We assume an affirmative answer from a test for bisection-convexity. If $r_{i,j}$ is a pivot then the chord from vertex $i$ to point $\vc{v}_j\ang{r_{i,j}}=\vc{v}_j\left(1-r_{i,j}\right)+\vc{v}_{j+1}r_{i,j}$ is a bisecting chord of  polygon $P$.
Our strategy is to take these bisecting chords and order them into a half-circle of direction vectors, implementing the proof of lemma~\ref{lm:circling}, as illustrated in figure~\ref{fig:sectorvectors}. And, which amounts to the same thing, each pair of consecutive chords must join the same pair of edges, as in figure~\ref{fig:sectors}.
Given two direction vectors forming a sector of the half-circle it is then straightforward to determine whether a vector~$\vc{u}$ lies within this sector.
Thus, given a slope vector $\vc{u}$ we determine which bisecting chord to pivot in order to produce a bisecting line in the direction of~\vc{u}.

 Implementing the ordering in the proof of lemma~\ref{lm:circling} needs a little care and this is where the antidiagonal ordering of the last subsection is useful. If we scan matrix $R_P$ in row-major order  then the consecutive pivots  will not normally appear in an order which respects the pivoting conditions illustrated in figure~\ref{fig:sectors}. For example in  figure~\ref{fig:rho-vals}  the first two bisecting chords encountered correspond to $r_{0,3}=1$ and $r_{1,5}=1/6$. However, we do not want to take the corresponding bisecting chords as consecutive direction vectors around our half-circle  because, although they originate in a  common edge $[0,1]$, their destination edges are different: $[4,5]$ and $[5,0]$, respectively, contrary to the scheme of figure~\ref{fig:sectors}. The correct ordering in which to take the originating vertices of our bisecting chords is $0,4,5,1,2,3$. This is the ordering which partitions the half-circle into sectors in figure~\ref{fig:sectorvectors}.

Geometrically, ordering the chords is simple enough: follow a chord to its head (destination) edge; follow this edge anticlockwise until the next chord is encountered; follow this new chord to its opposite edge, and so on. Translating this into vector arithmetic, involving points of intersection and changes of direction, might be accepted as a necessary implementation chore. However, it turns out that our antidiagonal scan does the implementation for us.

\begin{Prop}
\label{prop:torus}
If the   bisecting chords of a bisection-convex polygon $P$ are ordered as they appear as pivots during an antidiagonal  scan of the elements of~$R_P$, according to expression~(\ref{eqn:antidiag}) then

\vspace{-.15in}
 \addtolength{\itemsep}{-1.4\itemsep}
\begin{enumerate}
\item the endpoints of any pair of consecutive bisecting chords will occupy the same pair of polygon edges;
\item  treated as direction vectors, the ordered bisecting chords may be oriented to partition the half-circle.
\end{enumerate}
\end{Prop}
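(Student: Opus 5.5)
Antidiagonal $m$ consists of the pairs $(i,j)$ with $i+j\equiv m \pmod n$. By Proposition~\ref{prop:pivots} each antidiagonal contains exactly one pivot, so the scan produces the bisecting chords as a sequence $c_0,c_1,\ldots,c_{n-1}$ with $c_m$ the pivot on antidiagonal~$m$. I would proceed in three steps.

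\textbf{Step 1: the edge pair of a pivot.} To a pivot $(i,j)$ attach the pair of boundary edges it occupies. The tail is vertex~$i$, which lies on edges $[i-1,i]$ and $[i,i+1]$. The head lies on edge $[j,j+1]$. The parity-like invariant is $i+j$ (equivalently $(i-1)+(j+1)$). The key observation is that a single step $m\to m+1$ along antidiagonals can be realised in exactly two ways:
\begin{itemize}
\item \emph{Head step}: the tail stays at $i$ and the head moves to column $j+1$, i.e.\ past vertex $j+1$.
\item \emph{Tail step}: the tail moves to $i+1$ and the head stays on edge $[j,j+1]$.
\end{itemize}

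\textbf{Step 2: identify the rotation successor.} Take the bisecting chord $c$ from $i$ to a point $x$ on $[j,j+1]$. Rotate it anticlockwise, keeping it bisecting, exactly as in the proof of Theorem~\ref{Thm:bisection-convex}. Its two endpoints slide along $[i,i+1]$ and $[j,j+1]$ until one of them first reaches a vertex: either the head reaches $j+1$, or the tail reaches $i+1$. The chord $c'$ through that vertex is the consecutive bisecting chord. It is consecutive because, by the argument of Theorem~\ref{Thm:bisection-convex}, no vertex lies strictly between the corresponding endpoints.

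\begin{itemize}
\item If the tail reaches $i+1$ first, then $c'$ is the pivot $(i+1,j)$, with $i+j$ increased by one.
\item If the head reaches $j+1$ first, then $c'$ is the chord from $j+1$ back to edge $[i,i+1]$, i.e.\ the pivot $(j+1,i)$. This also has index sum $i+j+1$.
\end{itemize}

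In both cases $c'$ lies on antidiagonal $m+1$. Since that antidiagonal has a unique pivot (Proposition~\ref{prop:pivots}), $c'=c_{m+1}$. Both chords have endpoints on $[i,i+1]$ and $[j,j+1]$, which proves part~1.

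\textbf{Step 3: the half-circle.} Each step of Step~2 turns the direction anticlockwise by the sector angle. In the second case the chord's orientation is reversed, since the tail and head swap edges, so the direction vector must be negated. After $n$ steps we return to antidiagonal $m$, and hence to $c_0$ with its orientation reversed (its head and tail exchanged). This is a rotation by exactly $\tau/2$ with no overlap, by Lemma~\ref{lm:circling}. Orienting each $c_m$ consistently with this rotation therefore partitions the half-circle, which gives part~2.

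\textbf{Main obstacle.} The hardest part will be the degenerate vertex--vertex chords. When head and tail reach vertices simultaneously, or when $r=0$ or $1$, I must check that the tie-breaking rule of Definition~\ref{def:pivot} (the first unit on its antidiagonal, together with the zero following a unit) selects exactly the representative demanded by Step~2. I must also handle the modular wrap-around, where the comparisons $i<j$ interact with the reduction mod~$n$. I would treat these cases by perturbation, in the spirit of Proposition~\ref{prop:skip}, checking that the index sum still advances by exactly one at each step.
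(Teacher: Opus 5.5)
Your proof is correct, but it reaches the key step from the opposite direction to the paper's proof.

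\textbf{The paper's route.} It starts from the next scanned pivot $(x,y)$, which lies on antidiagonal $i+j+1$ by Proposition~\ref{prop:pivots}. Lemma~\ref{lm:intersecting} forces its chord to meet that of $(i,j)$. Index arithmetic then rules out every pair on that antidiagonal except $(i+1,j)$ and $(j+1,i)$. For part~2 the paper reverses each $\Delta_{x,y}$ with $y<x$, starting from the pivot in row~0, and appeals to Lemma~\ref{lm:circling}.

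\textbf{Your route.} You build the anticlockwise successor geometrically by continuously rotating the bisecting line. You note that it is a pivot on antidiagonal $m+1$, and let uniqueness identify it with the scanned pivot. This replaces the elimination step with continuity and uniqueness of the bisecting line in each direction, plus bisection-convexity to keep the endpoints on $[i,i+1]$ and $[j,j+1]$.

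\textbf{What each buys.} The paper's argument is shorter and purely combinatorial. Yours proves outright that scan order is anticlockwise angular order, which part~2 needs. The paper states this rather than derives it, since its elimination does not show that both candidates are anticlockwise rather than clockwise neighbours. Your orientation rule (negate at each head step) agrees with the paper's $y<x$ rule over the half-turn beginning in row~0.

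\textbf{Refinements.}
\begin{enumerate}
\item Consecutiveness in Step~2 follows from uniqueness of the bisecting line per direction: a bisecting chord strictly between would be the rotating line, which contains no vertex. It does not follow from the argument of Theorem~\ref{Thm:bisection-convex}.
\item The half-turn claim in Step~3 is best justified by counting. Over a half-turn from $\Delta_{0,j}$, the tail sweeps from $v_0$ to the head point and the head sweeps on round to $v_0$. Between them they pass each vertex once, giving exactly $n$ antidiagonal steps.
\item The degenerate cases need no perturbation. If both endpoints reach $v_{i+1}$ and $v_{j+1}$ together, the unit pivot on antidiagonal $m+1$ is one of your two candidates whichever tie-break applies. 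The zero pivot on $m+2$ is then a zero-angle head step from it.
\item Your wrap-around worry is genuine for the literal condition $i<j$ in Definition~\ref{def:pivot}. For chords through vertex~0 it admits both unit entries, e.g.\ $r_{0,3}=1$ and $r_{4,5}=1$ in figure~\ref{fig:rho-vals}. Under the paper's prose rule (the first unit on the antidiagonal), which is what Proposition~\ref{prop:pivots} actually relies on, your argument does not depend on which unit is chosen.
\end{enumerate}
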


{\bf Proof.}
\begin{enumerate}
\item Suppose that entry $(i,j)$ is a pivot, so that $\Delta_{i,j}$ is a bisecting chord. Let $r_{x,y}$ be the next pivot encountered in our scan. By proposition~\ref{prop:pivots} this will lie on antidiagonal $m=i+j+1$. Also, by lemma~\ref{lm:intersecting}, $\Delta_{x,y}$ must have nonzero intersection with $\Delta_{i,j}$. There are two triangles $\Delta_{x,y}$ that satisfy $x+y=i+j+1$ and have nonzero intersection with $\Delta_{i,j}$: $x=i+1, y=j$, and $x=j+1, y=i$. These are the only possible  triangles that give intersection because as the value of $x$ increases anticlockwise around $P$, so the value of $y$ decreases: for $i+1<x<j+1$ we have $y<j$. And similarly, if $y$ increases anticlockwise then $x$ decreases. Finally we observe that
    both $\Delta_{i+1,j}$ and $\Delta_{j+1,i}$ share with $\Delta_{i,j}$ three vertices out of the two edges $[i,i+1], [j,j+1]$, so in either case we have two consecutive bisecting chords whose endpoints share the same pair of edges
\item Start  with the pivot in the first row of $R_P$, that is, the first pivot corresponds to bisecting chord $\Delta_{0,i}$, for some $i$. Locate the pivots, according to definition~\ref{def:pivot}, as they appear on successive antidiagonals. Because we start  with a chord directed away from vertex~0, we may orient the chords consistently with this increase, as direction vectors, by reversing any chord $\Delta_{x,y}$ for which $y<x$.  Since consecutive bisecting chords intersect, the angular distance from chord $\Delta_{0,i}$ increases anticlockwise. Therefore the antidiagonal ordering of pivots implements the proof of lemma~\ref{lm:circling} and produces a half-circle.
\end{enumerate}
\QED

\section{Applying the tabulated polygon data}
\subsection{Creating a bisecting chord of the required slope}
We are now in a position to construct, for a given bisection-convex polygon, a picture like figure~\ref{fig:sectorvectors}. That is, we have a set of bisecting chords and a representation of these chords as sectors of the half-circle, specified as direction vectors. A given slope vector $\vc{u}$ is now easy to locate as lying within one of the sectors, or as being identical in slope to one of the direction vectors: compare the normal vector to $\vc{u}$ to consecutive direction vectors until the angle formed changes from acute to obtuse, or becomes zero.

Thus we may  proceed to put calculations on to the picture in figure~\ref{fig:sectors}(b), as depicted in figure~\ref{fig:find-t}.
 \begin{figure}[h]
\centerline{\includegraphics[scale=0.4]{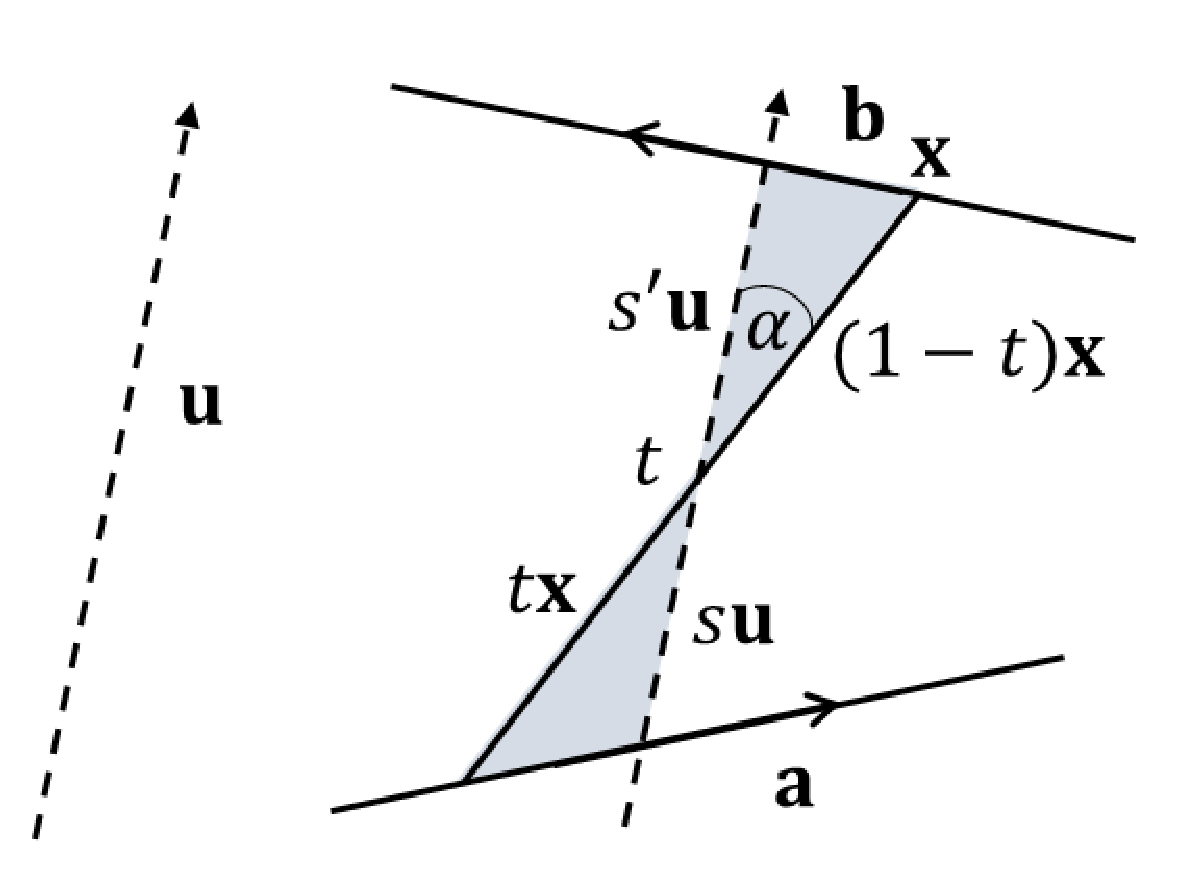}}

\caption{Finding the point of intersection of slope vector \vc{u} with bisecting chord \vc{x}}
\label{fig:find-t}
\end{figure}

In figure~\ref{fig:find-t}, the aim is to find the point of intersection of (translated) vector $\vc{u}$ which will given equal areas to the two triangles having bases on edge vectors $\vc{a}$ and $\vc{b}$, respectively. Since we know the length of bisecting chord $\vc{x}$ we can assert that this point of intersection divides $\vc{x}$ in the proportion $t$ to $(1-t)$, with $t$ unknown. The values $s$ and $s'$ are the scaling factors of $\vc{u}$ which give the distance from the point of intersection to edges $\vc{a}$ and $\vc{b}$, respectively. We do not, a priori, know how $s$ and $s'$ are related. However, when the two triangles in question have equal areas we can write
\begin{eqnarray}
\frac12|s\vc{u}||t\vc{x}|\sin\alpha&=&\frac12|s'\vc{u}||(1-t)\vc{x}|\sin\alpha\nonumber\\
\mbox{whence }s'&=&\frac{t}{1-t}s.
\label{eqn:sdash}
\end{eqnarray}
Now to find the value of $t$, it will suffice to write down two equations, and can do this with the two vector calculations which say, for each of our two triangles, following the two sides on $\vc{x}$ and $\vc{u}$ gives a vector orthogonal to the normal to the triangle base:
\begin{eqnarray*}
(t\vc{x}-s\vc{u})\cdot \vo{a}&=&0\\
(-s'\vc{u}+(1-t)\vc{x})\cdot\vo{b}&=&0.
\end{eqnarray*}
Eliminating $s'$ using equation~\ref{eqn:sdash} and then eliminating $s$ solves for $t$:
$$\frac{1-t}{t}=\sqrt{\frac{(\vc{x}\cdot\vo{a})(\vc{u}\cdot\vo{b})}{(\vc{x}\cdot\vo{b})(\vc{u}\cdot\vo{a})}}=\sqrt{T}, \mbox{ say}.$$
So \begin{equation}
t=\left(1+\sqrt{T}\,\right)^{-1}.
\label{eqn:tval}
\end{equation}

We show the calculation of $t$ in action in figure~\ref{fig:bisected}. The slope vector $\vc{u}$, shown as a dotted line at the origin, is found to be located in the final sector of the half-circle, after  the fifth bisecting chord, as ordered in the previous section. The fifth bisecting chord is accordingly pivoted, about the appropriately calculated point, marked with a circle, to give the direction of $\vc{u}$.
 \begin{figure}[h]
\centerline{\includegraphics[scale=0.6]{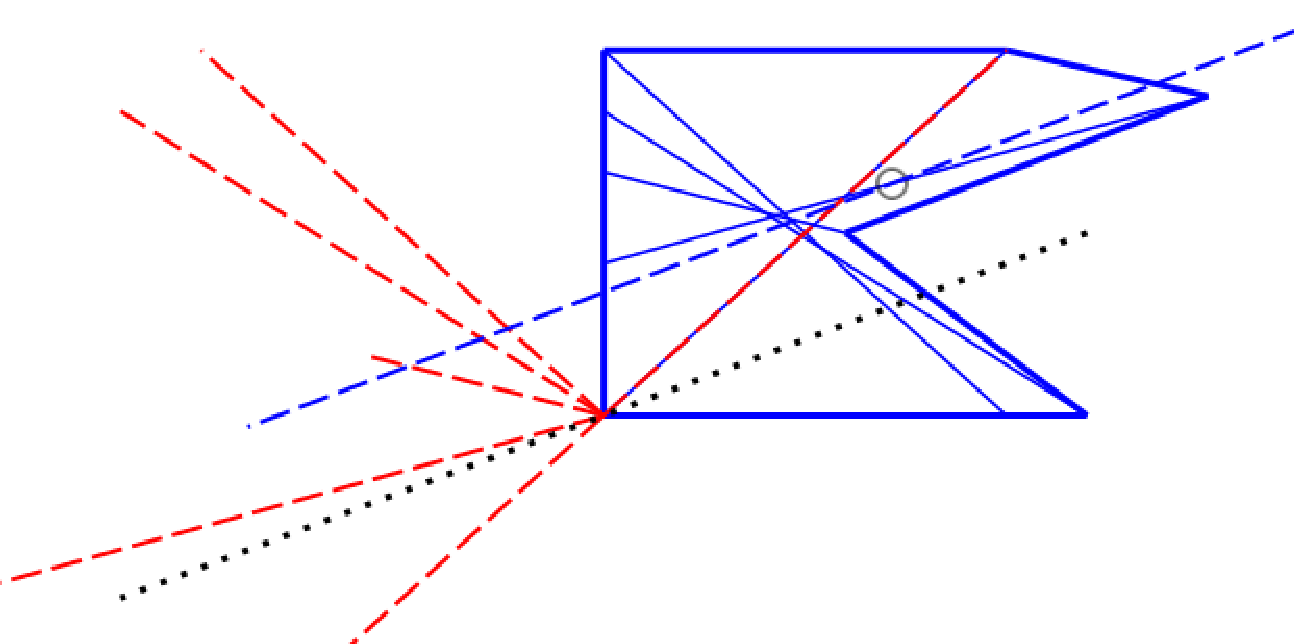}}

\caption{Vector $\vc{u}$ (dotted) translated to bisect the given polygon}
\label{fig:bisected}
\end{figure}

\subsection{Testing for bisection convexity}
\label{subsection:bc-testing}
Theorem~\ref{Thm:bisection-convex} in subsection~\ref{sub:bisecting-chords-id} suggests an algorithm for deciding if a given $n$-vertex polygon $P$ is bisection-convex: try to construct a bisecting chord at each vertex. If we are successful $n$ times then $P$ is bisection-convex. By proposition~\ref{prop:pivots} this test can be implemented in terms of the local bisections matrix $R_P$: a necessary condition for bisection-convexity is that each row must contain exactly one pivot. As we observed, this is not a sufficient condition: we still have to check that the chord specified by the pivot contains no points exterior to the polygon. In other words we must check that the chord intersects no edges of $P$ other than at its end points.  Although this is routine vector arithmetic it is quite laborious; luckily we can supply a speed-up by re-using the  triangle areas matrix $\Delta_P$.

The text-book check for line intersection may be stated in reference to figure~\ref{fig:cross-prod} in which the vertices are labelled with position vectors while $\vc{r}$ is the direction vector corresponding to the bisecting chord at $\vc{v}_i$. The polygon edge joining vertices $\vc{x}$ and $\vc{y}$ will intersect with the straight line passing through $\vc{v}_i$ in direction $\vc{r}$ if and only if $\vc{x}$ and $\vc{y}$ lie in different halfplanes in relation to this straight line. This occurs if and only if the two cross products $(-\vc{v}_i+\vc{x})\times\vc{r}$ and $(-\vc{v}_i+\vc{y})\times\vc{r}$ have different signs. So the strategy is to calculate the series of cross products $(-\vc{v}_i+\vc{v}_{i+k})\times\vc{r}$, $k=1,\ldots,n-1$ and look for sign changes. (This is the same idea as testing for convexity except that in that case we take cross products of consecutive edges.) As in subsection~\ref{subsection:local-bisections}, these are cross products in the plane so they are effectively scalars, $(a,b)\times (c,d)=ad-bc$.
 \begin{figure}[h]
\centerline{\includegraphics[scale=0.3]{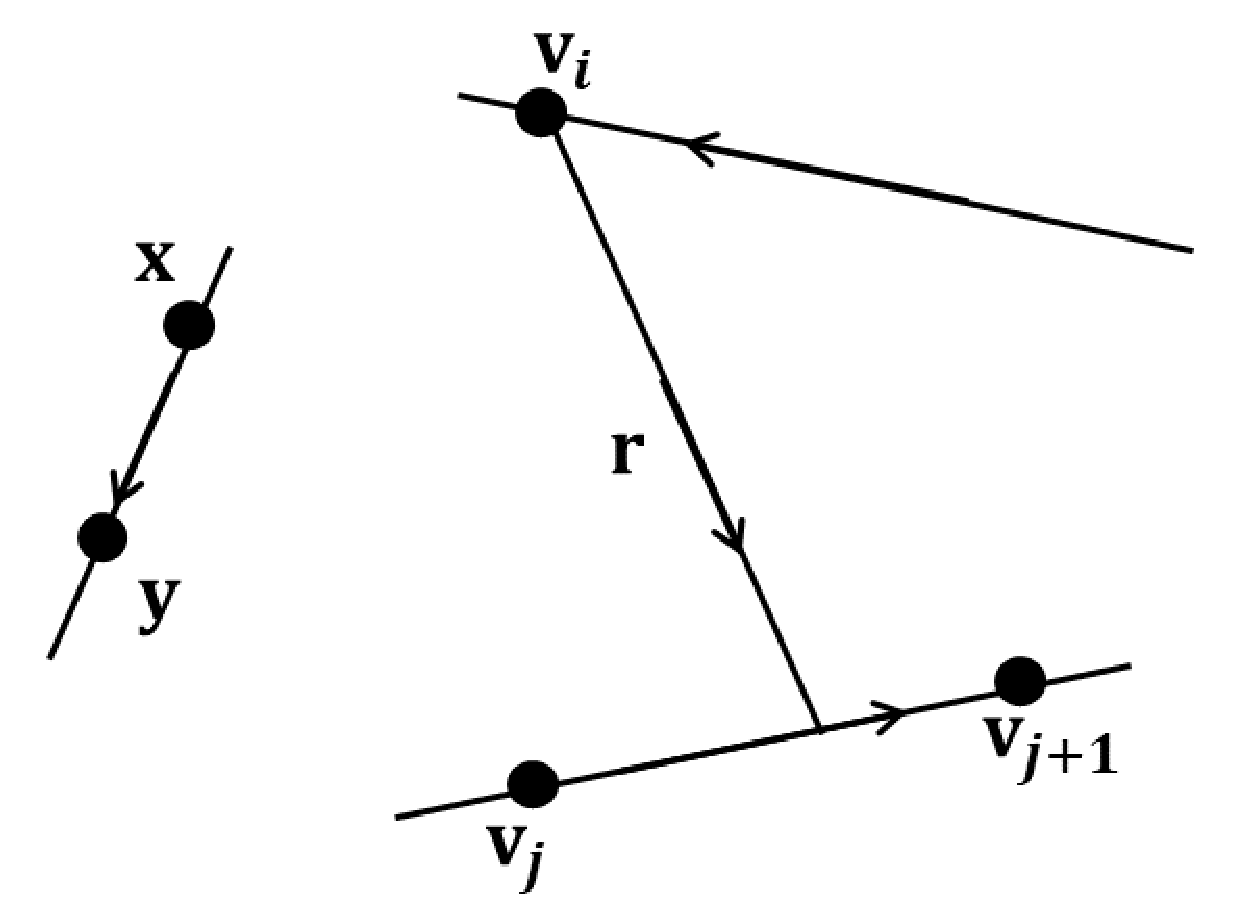}}

\caption{Testing polygon edge $xy$ for intersection with $\vc{r}$.}
\label{fig:cross-prod}
\end{figure}

The translation to triangle areas is due to the following lemma.
\begin{Lm}
\label{lm:cross-prod}
For bisecting chord $\vc{r}=-\vc{v}_i+(1-r_{i,j})\vc{v}_j+r_{i,j}\vc{v}_{j+1}$ we have
$$(-\vc{v}_i+\vc{x})\times\vc{r}=2r_{i,j}\left(\Delta_{i,j}-\Delta_{x,j}\right)+2\Delta_{i,x,j},$$
where $\Delta_{i,x,j}$ is the area of the triangle on vertices $\vc{v}_i, \vc{x},\vc{v}_j$.
\end{Lm}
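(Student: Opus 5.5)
The identity is pure bilinear algebra in the plane cross product, so I would prove it by splitting $\vc{r}$ into two pieces, one for each term on the right. The only geometric input is the convention, used throughout section~\ref{subsection:triangleareas}, that a triangle with vertices $\vc{a},\vc{b},\vc{c}$ has signed area $\frac12(\vc{b}-\vc{a})\times(\vc{c}-\vc{a})$. This quantity is positive exactly when $\vc{a},\vc{b},\vc{c}$ are anticlockwise. It is also invariant under cyclic permutation and under translating all three points.

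First I rewrite the chord direction as
$$\vc{r}=(\vc{v}_j-\vc{v}_i)+r_{i,j}(\vc{v}_{j+1}-\vc{v}_j).$$
Bilinearity of $\times$ then gives
$$(-\vc{v}_i+\vc{x})\times\vc{r}=(\vc{x}-\vc{v}_i)\times(\vc{v}_j-\vc{v}_i)+r_{i,j}\,(\vc{x}-\vc{v}_i)\times(\vc{v}_{j+1}-\vc{v}_j).$$
The first term is twice the signed area of the triangle on $\vc{v}_i,\vc{x},\vc{v}_j$ in that order, so it equals $2\Delta_{i,x,j}$.

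For the coefficient of $r_{i,j}$, I insert $\vc{v}_j$ by writing $\vc{x}-\vc{v}_i=(\vc{v}_j-\vc{v}_i)-(\vc{v}_j-\vc{x})$. This gives
$$(\vc{v}_j-\vc{v}_i)\times(\vc{v}_{j+1}-\vc{v}_j)\;-\;(\vc{v}_j-\vc{x})\times(\vc{v}_{j+1}-\vc{v}_j).$$
Using $(\vc{v}_j-\vc{a})\times(\vc{v}_j-\vc{a})=0$, I have $(\vc{v}_j-\vc{a})\times(\vc{v}_{j+1}-\vc{v}_j)=(\vc{v}_j-\vc{a})\times(\vc{v}_{j+1}-\vc{a})$. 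This is twice the signed area of the triangle with apex $\vc{a}$ on edge $[j,j+1]$. Taking $\vc{a}=\vc{v}_i$ gives $2\Delta_{i,j}$, and taking $\vc{a}=\vc{x}$ gives $2\Delta_{x,j}$. Substituting back yields the stated formula.

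The main point needing care is bookkeeping rather than difficulty. First, the orientation conventions must match those of $\Delta_P$: the areas are signed, with apex first and then $\vc{v}_j,\vc{v}_{j+1}$, and the examples in figure~\ref{fig:triangles} allow a quick sanity check of the sign. Second, $\Delta_{x,j}$ must be read as the signed area of the apex-$\vc{x}$ triangle on edge $[j,j+1]$ even when $\vc{x}$ is an endpoint of that edge. In that case it is $0$, consistent with the diagonal and subdiagonal conventions for $\Delta_P$. With $\vc{x}$ a polygon vertex $\vc{v}_{i+k}$, this is exactly the entry $\Delta_{i+k,j}$ of $\Delta_P$, which is what makes the lemma useful for the sign-change test.
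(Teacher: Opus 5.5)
Your proof is correct and takes essentially the paper's approach: both verify the identity by bilinear expansion of the planar cross product (using $\vc{a}\times\vc{a}=0$ and antisymmetry), identifying $2\Delta_{i,j}$, $2\Delta_{x,j}$ and $2\Delta_{i,x,j}$ with the same signed cross products the paper writes down. The only difference is organisational: you split $\vc{r}=(\vc{v}_j-\vc{v}_i)+r_{i,j}(\vc{v}_{j+1}-\vc{v}_j)$ and use translation invariance, whereas the paper expands everything into elementary products $\vc{v}_a\vc{v}_b$ and adds and subtracts $r_{i,j}\vc{v}_j\vc{v}_{j+1}$ to complete the triangle areas; your arrangement is the cleaner of the two.
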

{\bf Proof.}
For simplicity we will refer to vectors $\vc{v}_i$ by their subscripts. We again have recourse to the basic cross product properties: $i\times i=0$ and $i\times j=-j\times i$, which we will now further simplify by writing as $i^2=0$ and $ij=-ji$. Then we may recall the relevant equivalencies between triangle areas and cross products thus:
\begin{eqnarray*}
2\Delta_{i,j}&=&(-i+j)\times(-i+(j+1))=ij-i(j+1)+j(j+1)\\
2\Delta_{x,j}&=&(-x+j)\times(-x+(j+1)=xj-x(j+1)+j(j+1)\\
2\Delta_{i,x,j}&=&(-i+x)\times(-i+j)=ix-ij+xj.
\end{eqnarray*}
Now we compute
\begin{eqnarray*}
(-i+x)\times\vc{r}&=&-i\times(-i+j-r_{i,j}j+r_{i,j}(j+1))\\
&&+x\times(-i+j-r_{i,j}j+r_{i,j}(j+1))\\
&=&-ij+r_{i,j}ij-r_{i,j}i(j+1)+r_{i,j}j(j+1)-r_{i,j}(j(j+1)\\
&&+ix+xj-r_{i,j}xj+r_{i,j}x(j+1)-r_{i,j}j(j+1)+r_{i,j}j(j+1)\\
&=&2r_{i,j}\left(\Delta_{i,j}-\Delta_{x,j}\right)+2\Delta_{i,x,j}.
\end{eqnarray*}
\QED

 \begin{figure}[h]
\centerline{\includegraphics[scale=0.36]{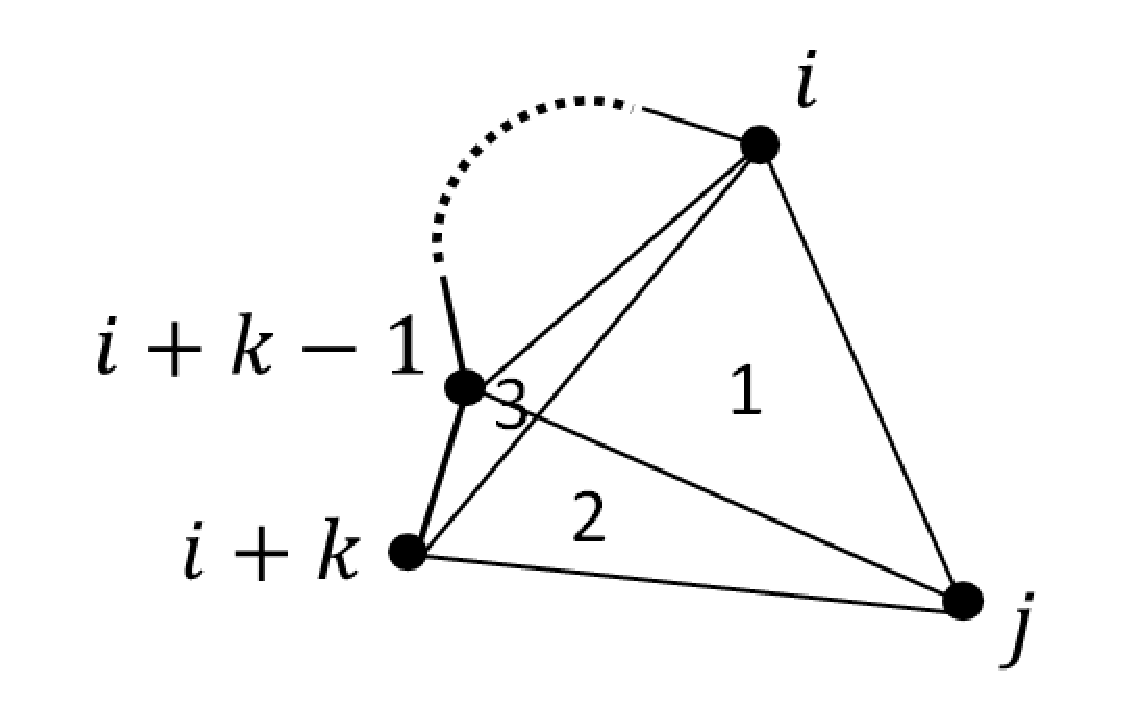}}

\caption{Recurrence in Lemma~\ref{lm:recurrence}}
\label{fig:recurrence}
\end{figure}
It remains to express the triangle $\Delta_{i,x,j}$ in terms of the entries in the $\Delta_{i,j}$ matrix. Given that our interest is in checking consecutive edges around the polygon we may rely on the following recurrence:
\begin{Lm}
\label{lm:recurrence}
For $k=1,\ldots,n-1$,
$$\Delta_{i,i+k,j}=\Delta_{i,i+k-1,j}+\Delta_{j,i+k-1}-\Delta_{i,i+k-1}.$$
\end{Lm}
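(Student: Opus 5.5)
The plan is a direct cross-product computation, in the style of the proof of Lemma~\ref{lm:cross-prod}. As there, I would write each vertex by its subscript, put $x=\vc{v}_{i+k-1}$ and $x'=\vc{v}_{i+k}$, and use the rules $a\times a=0$ and $a\times b=-b\times a$.

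The first step is to write each of the four signed areas as a cross product, using the same convention as Lemma~\ref{lm:cross-prod}:
\begin{eqnarray*}
2\Delta_{i,i+k,j}&=&(-i+x')\times(-i+j),\\
2\Delta_{i,i+k-1,j}&=&(-i+x)\times(-i+j),\\
2\Delta_{j,i+k-1}&=&(-j+x)\times(-j+x'),\\
2\Delta_{i,i+k-1}&=&(-i+x)\times(-i+x').
\end{eqnarray*}
The last two are triangle-matrix entries. In each, a vertex subtends the edge $[i+k-1,i+k]$, whose endpoints are exactly $x$ and $x'$.

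Next I would take the difference of the first two lines. Linearity in the first factor gives
$$2\left(\Delta_{i,i+k,j}-\Delta_{i,i+k-1,j}\right)=(x'-x)\times(j-i).$$
So the claim is equivalent to showing that $2\left(\Delta_{j,i+k-1}-\Delta_{i,i+k-1}\right)$ equals this same quantity.

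To check that, I would expand the two matrix entries:
$$(x-j)\times(x'-j)=xx'-xj-jx', \qquad (x-i)\times(x'-i)=xx'-xi-ix'.$$
Subtracting, the $xx'$ terms cancel, leaving $x'j-xj+xi-x'i$. This factors as $(x'-x)\times j-(x'-x)\times i=(x'-x)\times(j-i)$, which is what was needed.

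For degenerate indices, I would note that the computation is purely algebraic, so it holds for every $k$. This includes the cases where $i+k-1$ or $i+k$ coincides with $j$ or $j+1$. It is consistent with the convention $\Delta_{i,i}=0$, since for $k=1$ both sides start from $\Delta_{i,i,j}=0$. The only care needed is that the paper's entries with $\Delta_{a,a}=0$ and $\Delta_{a,a-1}=0$ agree with the cross-product formula. They do, because in each such case the cross product involves a repeated point and vanishes.

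I expect no real obstacle. The only delicate point is keeping the orientation and sign conventions for $\Delta_{a,b}$ consistent with those of Lemma~\ref{lm:cross-prod}.
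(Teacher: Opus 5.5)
Your proof is correct, but it takes a different route from the paper. The paper reads the identity off a figure. Areas~1 and~2 (the triangles $\Delta_{i,i+k-1,j}$ and $\Delta_{j,i+k-1}$), less area~3 (the triangle $\Delta_{i,i+k-1}$), leave $\Delta_{i,i+k,j}$. In effect, the two diagonal triangulations of the quadrilateral with vertices $\vc{v}_i,\vc{v}_{i+k-1},\vc{v}_{i+k},\vc{v}_j$ (in that order) must have the same signed area. The paper then concedes that this picture glosses over non-convex configurations, and leaves it to the reader to check that clockwise (negative) triangles resolve any discrepancy.

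Your cross-product computation carries out that check once and for all. You fix the same signed-area convention used for $\Delta_P$ and in the proof of Lemma~\ref{lm:cross-prod}, and then reduce everything to $(x-j)\times(x'-j)-(x-i)\times(x'-i)=(x'-x)\times(j-i)$. This identity holds for any four points of the plane. So your version needs no case analysis for reflex vertices, collinear triples, or the boundary indices where $i+k-1$ or $i+k$ meets $j$ or $j+1$. You also rightly check that the conventions $\Delta_{a,a}=\Delta_{a,a-1}=0$ agree with the cross-product formula. Your argument makes clear that the lemma uses nothing about $P$ beyond the vertex labelling. What the paper's figure offers instead is intuition: it shows which pieces are added and removed as the vertex $\vc{v}_{i+k}$ advances, which is the sweep that Lemma~\ref{lm:triangles} then iterates.
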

{\bf Proof.}
The three triangles in figure~\ref{fig:recurrence} correspond to the three terms on the right-hand-side of the Lemma. Areas~1 and~2, less area~3 gives area $\Delta_{i,i+k,j}$ as required.
\QED

We remark that such arguments as given in the above proof gloss over the impact of non-convex sections of the polygon. We leave it to the reader to reassure themselves that the clockwise oriented triangles that arise, computing as negative area, resolve any potential discrepancies.

 From the preceding lemmas and with the preceding notation we have:
  \begin{Lm}
\label{lm:triangles} Define the sequence $F_k$, $k\geq 0$, by
$$F_0=0 \mbox{ and, for }k\geq 1, F_k=F_{k-1}+\Delta_{j,i+k-1}-\Delta_{i,i+k-1}.$$
Then for $k=1,\ldots,n-1$,
$$\left(-\vc{v}_i+\vc{v}_{i+k}\right)\times\vc{r}=2r_{i,j}\left(\Delta_{i,j}-\Delta_{i+k,j}\right)+2F_k.$$
\end{Lm}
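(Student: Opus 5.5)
The plan is to combine Lemma~\ref{lm:cross-prod} with Lemma~\ref{lm:recurrence} through a short induction on $k$. Taking $\vc{x}=\vc{v}_{i+k}$ in Lemma~\ref{lm:cross-prod} gives directly
$$\left(-\vc{v}_i+\vc{v}_{i+k}\right)\times\vc{r}=2r_{i,j}\left(\Delta_{i,j}-\Delta_{i+k,j}\right)+2\Delta_{i,i+k,j},$$
so everything reduces to the identity $\Delta_{i,i+k,j}=F_k$ for $k=1,\ldots,n-1$.

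I will prove $\Delta_{i,i+k,j}=F_k$ by induction on $k$, starting from $k=0$. For the base case, $\Delta_{i,i,j}$ is the signed area of the degenerate triangle on $\vc{v}_i,\vc{v}_i,\vc{v}_j$. In the cross-product form used in the proof of Lemma~\ref{lm:cross-prod}, this is $\frac12(-i+i)\times(-i+j)=0$, which equals $F_0$. For the inductive step, assume $\Delta_{i,i+k-1,j}=F_{k-1}$. Lemma~\ref{lm:recurrence} then gives
$$\Delta_{i,i+k,j}=F_{k-1}+\Delta_{j,i+k-1}-\Delta_{i,i+k-1},$$
and the right-hand side is precisely the defining recurrence for $F_k$. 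Substituting this back into the displayed formula yields the lemma.

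The main obstacle is to make sure that Lemma~\ref{lm:recurrence} holds as an identity of signed areas in all cases, because its proof was a picture argument that glosses over non-convexity. I would make it rigorous algebraically, in the shorthand $ij$ for cross products. Write $a=i+k-1$ and $b=i+k$. Then
$$2\Delta_{i,b,j}=ib-ij+bj \quad\mbox{and}\quad 2\Delta_{i,a,j}=ia-ij+aj,$$
while
$$2\Delta_{j,a}=ja-j b+ab \quad\mbox{and}\quad 2\Delta_{i,a}=ia-ib+ab.$$
Hence $2(\Delta_{i,a,j}+\Delta_{j,a}-\Delta_{i,a})=ia-ij+aj+ja-jb-ia+ib=ib-ij+bj$, using $aj+ja=0$. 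This is exactly $2\Delta_{i,b,j}$, so the recurrence is a purely algebraic identity valid for every polygon, convex or not.

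Two edge conventions also need checking. First, when $a$ or $b$ coincides with $j$ or $j+1$, the entries $\Delta_{j,j}=0$ and $\Delta_{j,j-1}=0$ must agree with the cross-product formula; they do, since such a triangle repeats a vertex and its cross-product expression vanishes. Second, indices are taken mod $n$, which the algebra respects.
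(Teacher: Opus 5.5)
Your proof is correct and is the argument the paper intends when it says ``from the preceding lemmas'': set $\vc{x}=\vc{v}_{i+k}$ in Lemma~\ref{lm:cross-prod}, then unwind Lemma~\ref{lm:recurrence} from the degenerate starting value $\Delta_{i,i,j}=0=F_0$. Your cross-product check of Lemma~\ref{lm:recurrence} (including the zero conventions for $\Delta_{i,i}$, $\Delta_{j,j}$ and $\Delta_{j,j-1}$) is a real improvement, because it replaces the paper's figure-based argument and settles the non-convex case that the paper leaves to the reader.
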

\QED

Now our test for bisection-convexity is the following:
 \begin{Thm}
\label{thm:triangles} Let $P$ be an $n$-vertex polygon with triangle areas matrix $\Delta_P$ and local bisections matrix $R_P$. For distinct vertices $i$ and $j$ of $P$,
 the triangle  $\Delta_{i,j}$ is a bisecting chord if and only if
\begin{enumerate}
\item in $R_P$, entry $r_{i,j}$ is a pivot and is the only pivot in row $i$;
\item with the sequence $F_k$  defined as in lemma~\ref{lm:triangles}, the sequence of values
$$2r_{i,j}\left(\Delta_{i,j}-\Delta_{i+k,j}\right)+2F_k, k=1,\ldots,n-1,$$
exhibits exactly one change of sign.
\end{enumerate}
\end{Thm}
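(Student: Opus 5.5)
We prove the two directions separately, using Theorem~\ref{thm:AplusAminus} to pass between local and global bisection and Lemma~\ref{lm:triangles} to read off which side of the chord's line each vertex lies on.

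\emph{Forward direction.} Suppose $\Delta_{i,j}$ is a bisecting chord, so some actual chord from $\vc{v}_i$ to $\vc{v}_j\ang{r}$ with $r\in[0,1]$ bisects $P$ and meets $\partial P$ in exactly two points.
\begin{itemize}
\item Since the line meets the boundary only at its two ends, the polygons $P^+(i,j,r)$ and $P^-(i,j,r)$ are genuine simple polygons. They partition $P$ into the two half-plane pieces, and these have equal area.
\item Hence $A^+(i,j,r)=A^-(i,j,r)$. Both are affine in $r$ and $\Delta_{i,j}\neq 0$, so part~2 of Theorem~\ref{thm:AplusAminus} gives $r=r_{i,j}$.
\item The endpoint conventions of Definition~\ref{def:pivot} then make $(i,j)$ a pivot. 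If $r\in\{0,1\}$ we choose the representative of the vertex--vertex chord by its position on the antidiagonal.
\item Uniqueness of the pivot in row~$i$ is the delicate part. Any second pivot $(i,j')$ would give a point on edge $[j',j'+1]$ where the local areas balance. I would rule this out by the argument of Theorem~\ref{Thm:bisection-convex}: the line of a bisecting chord crosses $\partial P$ only twice, so along edge $[j',j'+1]$ the formal area $A^+$ is a genuine area strictly monotone in the chord's position, and cannot return to $A_P/2$.
\item For condition~2, $\vc{r}$ is the direction of the chord. By Lemma~\ref{lm:triangles} the listed values are the cross products $(-\vc{v}_i+\vc{v}_{i+k})\times\vc{r}$, i.e.\ the signed sides of the line on which the vertices $i+1,\dots,i-1$ lie.
\item Walking around $\partial P$ from $i+1$ to $i-1$, we cross the line only at the far endpoint on edge $[j,j+1]$, since the line meets $\partial P$ in exactly two points. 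So the sign changes exactly once, between $k=j-i$ and $k=j-i+1$ (mod $n$).
\end{itemize}

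\emph{Converse.} Suppose conditions~1 and~2 hold.
\begin{itemize}
\item By condition~1, $r_{i,j}\in[0,1]$, so $\vc{v}_j\ang{r_{i,j}}$ lies on the actual edge $[j,j+1]$. By Theorem~\ref{thm:AplusAminus} the formal areas then satisfy $A^+=A^-=A_P/2$.
\item By condition~2 and Lemma~\ref{lm:triangles}, the vertices $i+1,\dots,j$ lie weakly on one side of the line through $\vc{v}_i$ in direction $\vc{r}$, and $j+1,\dots,i-1$ on the other.
\item Consequently no edge other than $[j,j+1]$ (and the two edges at $\vc{v}_i$) crosses the line. The line therefore meets $\partial P$ in exactly two points, which is clause~(b) of Definition~\ref{def:chords}.
\item It also follows that $P^+$ and $P^-$ are simple polygons lying in opposite half-planes. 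Their formal shoelace areas are then genuine areas, and the equality $A^+=A^-$ is global half-plane bisection, which is clause~(a). Hence $\Delta_{i,j}$ is a bisecting chord.
\end{itemize}

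\emph{Main obstacle.} Two points need care. The first is degenerate signs: zero cross products arise at collinear vertices and when $r_{i,j}\in\{0,1\}$. I would treat these by counting a sign change only between strictly nonzero values, skipping zeros exactly as in Proposition~\ref{prop:skip}. The second is the row-uniqueness clause in the forward direction. If the monotonicity argument above is hard to make rigorous, I would instead read condition~1 as bookkeeping: when $P$ is bisection-convex each row contains one pivot by Proposition~\ref{prop:pivots}. The geometric content of the theorem then rests entirely on condition~2.
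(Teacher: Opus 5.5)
The paper states this theorem with no proof (just $\Box$). The step you flag as delicate, uniqueness of the pivot in row~$i$ in the forward direction, is not merely hard: for general $P$ it is false, so no argument can supply it.

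Your monotonicity claim conflates one line with many. That the line through $\vc{v}_i$ and $\vc{v}_j\ang{r}$ meets $\partial P$ only twice says nothing about chords from $\vc{v}_i$ to other edges $[j',j'+1]$. By~(\ref{eqn:AA1}), $\frac{d}{dr}A^{+}(i,j',r)=-\Delta_{i,j'}$. So $A^{-}=A_P-A^{+}$ is the signed area swept by the segment from $\vc{v}_i$ to a point moving anticlockwise along $\partial P$. It is piecewise linear, and it is monotone only if $\Delta_{i,j'}\geq 0$ throughout row~$i$.

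Here is a concrete failure. Put $\vc{v}_i$ at the origin, let the $x$-axis be the bisector, and let $P$ be the rectangle $[0,c]\times[0,h]$ joined along the axis to a hooked region below it. Leaving $\vc{v}_i$, the boundary of that region does the following: it runs out in direction $-3\pi/4$ to radius $R$; follows a polygonal arc of radius $R$ to direction $-\pi/4$; comes in to a radius $r_1$ slightly less than $R$; and runs back along radius $r_1$ almost to direction $-3\pi/4$. It then reaches the side $x=c$ through a small sector of radius $r_2\ll r_1$ and a thin wedge just below the axis.

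The axis meets $\partial P$ only at $\vc{v}_i$ and at $p^*=(c,0)$, and $h$ can be chosen so that it bisects. So the $\Delta_{i,j}$ containing the chord to $p^*$ is a bisecting chord. Yet along the hook $A^{-}$ climbs to about $\pi R^2/4$, far above $A_P/2$, which is only about $\frac{\pi}{4}(R^2-r_1^2)$ plus small terms. It then falls below $A_P/2$ before returning to it at $p^*$. So row~$i$ has two further entries in $(0,1)$, and condition~1 fails.

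Your fallback, importing bisection-convexity through Proposition~\ref{prop:pivots}, changes the hypotheses. It would also make the theorem useless as the test of Subsection~\ref{subsection:bc-testing}, which the paper applies to figure~\ref{fig:bisectionconvex}(b). Your converse never uses uniqueness, and the provable statement drops it: $\Delta_{i,j}$ is bisecting if and only if $r_{i,j}\in[0,1]$ and condition~2 holds, subject to the caveats below.

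Two degenerate points also need repair. First, your rule of skipping zeros breaks the converse. A zero value means $\vc{v}_{i+k}$ lies on the line, which then meets $\partial P$ a third time and violates clause~(b) of Definition~\ref{def:chords}. This happens even though a pattern like $+,+,0,+,-,-$ shows one sign change once zeros are skipped. A zero is admissible only at the far endpoint: $\vc{v}_{j+1}$ when $r_{i,j}=1$, or $\vc{v}_j$ when $r_{i,j}=0$.

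Second, ``choosing a representative'' does not settle the endpoint case, because the theorem concerns a fixed pair $(i,j)$. If the bisecting chord joins $\vc{v}_i$ to $\vc{v}_{j+1}$, then both $\Delta_{i,j}$ and $\Delta_{i,j+1}$ are bisecting chords in the sense of Definition~\ref{def:chords}. Definition~\ref{def:pivot}, as written, makes only one of $(i,j)$ and $(i,j+1)$ a pivot. The exception is $j=n-1$, where it makes both pivots; read literally, it gives row~$4$ of figure~\ref{fig:rho-vals} the two pivots $(4,5)$ and $(4,0)$. So either pivot status or row uniqueness fails for such pairs, and they must be handled explicitly.
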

\QED

We can apply the two tests of theorem~\ref{thm:triangles} to figure~\ref{fig:bisectionconvex}(b). Vertex~0 fails the first test: $r_{0,1},r_{0,2}$ and $r_{0,4}$ are all pivots (the first two are the non-local bisections shown in the figure, the non-local bisection corresponding to $r_{0,4}$ was not shown). All other rows of the local bisections matrix have a unique pivot and therefore pass test~1. But just two vertices pass the sign condition of test~2: vertex~1 records four pluses followed by a minus: four edges following vertex~1 and then edge $[5,0]$ which crosses the bisecting chord. And vertex~5 records a plus followed by four minuses: the edge $[5,0]$ is immediately followed by the edge crossing the bisecting chord.

\section{Summary and concluding remarks}
We have completed our explanation of how to bisect the area of a given bisection-convex polygon with a straight line cut in a given direction. It seems worthwhile to summarise the steps involved, and this is done in table~\ref{tab:bisection-steps}. The main work is calculating the matrices in steps~1 and~2. This data allows us to check whether the given polygon is bisection-convex and logically this is step~3 since if the answer is No then we should abandon our approach in favour of Shermer's algorithm. If the answer is Yes then the data for step~4 is read off directly from matrix~$R_P$ and thereafter we are in the happy position of being able to do rapid repeated bisections.

\begin{table}[h]
\begin{tabular}{c|l|c}\hline
1 & Calculate triangle areas matrix $\Delta_P$ & section 3.1\\
2 & Calculate local bisections matrix $R_P$ & section 3.3 \\
3 & Confirm polygon is bisection-convex & section 4.2 \\
4 & Form half-circle of bisecting chord angles & section 3.5 \\
5 & Locate given angle in half-circle &  section 4.1 \\
6 & Pivot chosen bisecting chord to given angle & section 4.1 \\
7 & Repeat steps 5 and 6 as required & \\ \hline
\end{tabular}
\caption{Schema for bisection-convex all-angles polygon bisection}
\label{tab:bisection-steps}
\end{table}

By `rapid repeated bisections' we mean sublinear time in the worse case, per bisection: if the required bisection angles arrive distributed uniformly at random in the half-circle then step~5 in our table is a $\log n$ binary search; step~6 is, of course, constant time. If the required bisection angle is increased or decreased monotonically (a typical application being `ham sandwich' bisection of two polygons, e.g. \cite{Diaz}) then  successive bisections will be made in constant time.

As for our two matrices, $\Delta_P$ and $R_P$, they require, a priori, a quadratic amount of time to construct. Theorem~\ref{Thm:DeltaRank} offers the  prospect of driving the recursive procedures of proposition~\ref{prop:feedforward} and theorem~\ref{thm:triangles} entirely from just three initial rows of $\Delta_P$. This might reduce the computational complexity of steps~1,~2 and~3 in table~\ref{tab:bisection-steps} but seems to indicate a great deal of complicated book-keeping!

As it stands, our method is competitive only for  Fechtor-Pradines' `not-too-non-convex' polygons and for a need for repeated bisection angles, ideally monotonically increasing or decreasing. Nevertheless, our local bisections matrix is defined for any simple polygon and the bisection-convexity test of theorem~\ref{thm:triangles} works on a vertex-by-vertex basis. This means that, even in a very non-convex polygon, we may be able to identify sectors of the half-circle where our method still works. For example, in figure~\ref{fig:bisectionconvex}(b) the two bisecting chords $\Delta_{1,5}$ and $\Delta_{5,0}$ describe a sector of more than $\tau/15$ radians within which we can produce bisections on demand.

Moreover, we produced our method based on the idea of `local' bisections on the understanding that bisection-convexity would qualify them as `global'. Global, halfplane, bisection is essential in some contexts, such as the aforementioned ham-sandwich application. But our approach has wider applicability. Suppose that  figure~\ref{fig:bisectionconvex}(b) represents a field to be divided between two farmers. We can calculate that the corresponding local bisections matrix has values $r_{0,1}=31/40$ and $r_{0,2}=9/43$. Then the locally bisecting chord $\Delta_{0,1}$ (that is, the chord which joins vertex~0 to edge $[1,2]$) seems to be a perfectly equitable proposition: one farmer (given region $P^+(0,1,31/40)$) has twice as many angles to negotiate but she cannot argue with the area these angles enclose. The same might be said for locally bisecting chord $\Delta_{2,0}$. Such local bisections even have a `creative accounting' side to them: the locally bisecting chord $\Delta_{0,2}$ gives a larger triangular area to the farmer receiving $P^-(0,2,9/43)$ but the second farmer might be given in compensation the triangle, exterior to the polygon, cut off at vertex~2 by the chord: the local bisection property guarantees that this has the same area as the extra wedge given to farmer no.~1. We have lost sight of our motivation of all-angles bisection (unless our farmers are very anxious about prevailing winds) but it would be nice to find our two matrices of polygon data  had more milage (or perhaps we mean acreage) in them than just ham sandwiches.

\end{document}